\newtheorem{theo}{Theorem}
\newtheorem{defi}{Definition}
\newtheorem{prop}{Proposition}
\newtheorem{lemma}{Lemma}
\newtheorem{coro}{Corollary}
\newtheorem{remark}{Remark}
\newcommand{\la}{\lambda}
\newcommand{\rmd}{{\rm d}}
\newcommand{\vfi}{\varphi}
\newcommand{\wi}{W^\infty}
\newcommand{\binf}{b^\infty}
\newcommand{\LWi}{\low{W}_\infty}
\newcommand{\lbb}{\underline b}
\newcommand{\UWi}{\overline{W}^\infty}
\newcommand{\uW}{\overline W}
\newcommand{\lW}{\underline W}
\newcommand{\bR}{\mathbb{R}}
\newcommand{\ubb}{\overline{b}}
\definecolor{wineRed}{rgb}{0.7,0,0.3}
\newcommand{\lowW}{\underline{W}}
\newcommand{\lowb}{\underline{b}}
\newcommand{\upW}{\overline{W}}
\newcommand{\upb}{\overline{b}}
\newcommand{\low}[1]{\underline{#1}}
\begin{document}

\title{Convergence of solutions of a one-phase Stefan problem with  Neumann boundary data to a self-similar profile}
\author{Danielle  Hilhorst$^{1}$,  Sabrina  Roscani$^{2,3}$, Piotr Rybka$^4$ \\
\small{$^1$CNRS and Laboratoire de Math\'ematiques, University Paris-Saclay} \\
\small{91405 Orsay Cedex, France} \\
\small{$^{2}$ CONICET}\\ \small{$^3$ Depto. Matem\'atica, FCE, Univ. Austral}\\ \small{Paraguay 1950, Rosario, S2000FZF, Argentina}\\
$^4$ \small{Department of Mathematics, University of Warsaw},\\ 
\small{ul. Banacha 2 02-097 Warszawa, Poland}}
\date{}
\maketitle

\begin{abstract}
We study a one-dimensional one-phase Stefan problem with a Neumann boundary condition on the fixed part of the boundary. We construct the unique self-similar solution, and show that starting from arbitrary initial data, solution orbits converge to the self-similar solution.
\end{abstract}

\noindent{\bf Keywords:} Stefan problem;  Neumann boundary condition; large time behavior;  self-similar profile.

\noindent{\it 2020 MSC:} 35B40  35R35  35C06  80A22.

\section{Introduction}\label{Sec:Introduction} 
\subsection{On the content of the paper}
The Stefan problem has been extensively studied in the past decades. Despite the number of articles and  books published on this topic, \cite{Ru1971}, \cite{Fr1988}, 
\cite{Me1992}, \cite{Gu2018}, there are still open problems left, see for instance \cite{aiki}, \cite{BoHi2023}. One of the questions which requires further attention is the long time behavior of the one-phase Stefan problem, where the heat flux is specified at the fixed boundary, namely the Neumann problem :
\begin{equation}\label{NSP}
\begin{array}{cll}
(i) & {u}_t(x,t)- u_{xx} (x,t) =0, & t > 0, \ 0<x<s(t),\\
(ii) &  -u_x(0,t)=\displaystyle{\frac{h}{\sqrt{t+1}}},\quad u(s(t),t)= 0, & t >0,\\
(iii)& \dot{s}(t)=-u_x(s(t),t), & t >0,\\
(iv) & u(x,0)=u_0(x), & 0<x<s(0)=b_0,
\end{array}
\end{equation}

We stress that this type of boundary condition is reasonable from the modeling view point. Namely, the decay of data presented in (\ref{NSP}-$ii$) is consistent with the parabolic scaling. We choose, however, to shift the initial time by a positive constant, say 1. In this way we avoid an artificial singularity at the initial time $t=0$. Let us note that the existence of a unique smooth solution $(u,s)$ to Problem (\ref{NSP}) has already been  established. We recall the assumptions of this result in Subsection 1.2. Here $u$ may be called the temperature and $s$ is the position of the interface.

Our approach to study the long time behavior of Problem (\ref{NSP}) follows a general heuristics saying that the time asymptotics is determined by the steady states (there is none for (\ref{NSP})) or special solutions such as self-similar solutions or travelling waves. In fact, we show in Corollary \ref{c-sss} that there is exactly one self-similar solution $(v,\sigma)$, which has the form, $v(x,t) = U\left( \frac x{\sqrt{t+1}} \right)$ and $\sigma(t) = \omega \sqrt{t+1}$ for some constant $\omega$ and a profile function $U.$ Our main result states that the self-similar solution is attracting. 
\begin{theo}\label{tw1}
Suppose that $(u_0, b_0)$   satisfies the conditions 
\begin{equation}\label{zalu0-D}
0\le u_0\in W^{1,\infty}(0,+\infty), \quad 0< u_0(0)
\ \text{ and }   u_0(x) =0 \,\text{in } [b_0,\infty).
\end{equation}
Let $(u,s)$ be the corresponding solution of Problem (\ref{NSP}). Then,\\
\,\rm{1)} $\lim\limits_{t\to\infty}s(t)/\sqrt{t+1} = \omega$;\\
\rm{2)} 
$\displaystyle{\lim\limits_{t\to\infty}\sup_{x/\sqrt{t+1}\in[0,\omega]} \left| u(x,t) - U\left(\frac x{\sqrt{t+1}}\right)\right|} =0.$
\end{theo}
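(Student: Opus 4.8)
The plan is to pass to self‑similar (parabolic) variables, in which the self‑similar solution becomes the unique steady state, exhibit a Lyapunov functional for the rescaled flow, and close the argument by a LaSalle‑type invariance principle.

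\medskip
\noindent\textbf{Step 1 (rescaling).} Set $\tau=\log(t+1)$, $\xi=x/\sqrt{t+1}$, $w(\xi,\tau)=u(x,t)$ and $\beta(\tau)=s(t)/\sqrt{t+1}$. A direct computation turns \eqref{NSP} into the autonomous one–phase Stefan problem
\begin{equation*}
w_\tau=w_{\xi\xi}+\tfrac12\,\xi\,w_\xi,\quad 0<\xi<\beta(\tau);\qquad -w_\xi(0,\tau)=h,\quad w(\beta(\tau),\tau)=0;\qquad \beta'(\tau)=-w_\xi(\beta(\tau),\tau)-\tfrac12\beta(\tau),
\end{equation*}
with $w(\cdot,0)=u_0$ on $(0,b_0)$ and $\beta(0)=b_0$. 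Statements 1) and 2) of Theorem~\ref{tw1} are exactly $\beta(\tau)\to\omega$ and $w(\cdot,\tau)\to U$ uniformly as $\tau\to\infty$, and by Corollary~\ref{c-sss} the pair $(U,\omega)$ is the \emph{unique} stationary solution of the rescaled problem. Note $w\ge0$ (comparison with $0$) and, writing $p:=-w_\xi(\beta(\tau),\tau)=e^{\tau/2}\dot s(t)\ge0$ (since $u\ge0$ and $u(s(t),t)=0$), one has $\beta+\beta'=\tfrac12\beta+p>0$ for $\tau>0$; this elementary positivity is used repeatedly below.

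\medskip
\noindent\textbf{Step 2 (uniform a priori bounds).} Integrating \eqref{NSP}-$(i)$ in $x$ over $(0,s(t))$ and using $(ii)$--$(iii)$ gives the exact identity $\int_0^{s(t)}u\,\rmd x+s(t)=\int_0^{b_0}u_0\,\rmd x+b_0+2h(\sqrt{t+1}-1)$, that is, $\int_0^{\beta(\tau)}w\,\rmd\xi+\beta(\tau)=2h+E_0e^{-\tau/2}$ with $E_0=\int_0^{b_0}u_0+b_0-2h$. Since $w\ge0$ this already yields $\beta(\tau)\le 2h+|E_0|$. A comparison of $u$ with the solution of the heat equation on the half line $\{x>0\}$ carrying the same Neumann flux and initial datum $\|u_0\|_\infty\mathbf 1_{(0,\infty)}$ bounds $\|w(\cdot,\tau)\|_\infty$ uniformly in $\tau$; then $\int_0^\beta w\le\beta\,\|w\|_\infty$ together with the identity forces $\beta(\tau)\ge c_1>0$ for $\tau$ large. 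Finally, parabolic regularity for one‑phase Stefan problems (the same theory underlying the existence statement recalled in Subsection~1.2) provides, for $\tau\ge1$, uniform bounds for $w$ up to the moving boundary in $C^{2+\alpha,1+\alpha/2}$ and for $\beta$ in $C^{1+\alpha/2}$. Extending $w$ by $0$ past $\beta(\tau)$, the orbit $\{(w(\cdot,\tau),\beta(\tau)):\tau\ge1\}$ is relatively compact in $C([0,2h+|E_0|])\times[c_1,2h+|E_0|]$.

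\medskip
\noindent\textbf{Step 3 (Lyapunov functional — the crux).} With $\rho(\xi)=e^{\xi^2/4}$, so that $w_{\xi\xi}+\tfrac12\xi w_\xi=\rho^{-1}(\rho w_\xi)_\xi$, set
\begin{equation*}
\Psi(\tau):=\frac12\int_0^{\beta(\tau)}w_\xi(\xi,\tau)^2\rho(\xi)\,\rmd\xi-h\,w(0,\tau)+\frac18\int_0^{\beta(\tau)}s^2\rho(s)\,\rmd s .
\end{equation*}
Differentiating, integrating by parts in $\xi$, and using $-w_\xi(0,\tau)=h$, the equation, and $w_\tau(\beta(\tau),\tau)=-\beta'(\tau)w_\xi(\beta(\tau),\tau)$ (from $w(\beta(\tau),\tau)\equiv0$), the first two terms of $\Psi$ contribute $-\tfrac12\beta'(\tau)\,w_\xi(\beta,\tau)^2\rho(\beta)-\int_0^\beta\rho\,w_\tau^2\,\rmd\xi$. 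Inserting $w_\xi(\beta,\tau)=-p=-(\beta'+\tfrac12\beta)$ and adding the derivative $\tfrac18\beta^2\rho(\beta)\beta'$ of the third term makes the boundary contributions collapse, using $\tfrac\beta2-p=-\beta'$ and $\tfrac\beta2+p=\beta+\beta'$, to $-\tfrac12\rho(\beta)\beta'^2(\beta+\beta')$; hence
\begin{equation*}
\frac{\rmd}{\rmd\tau}\Psi(\tau)=-\frac{\rho(\beta(\tau))}{2}\,\beta'(\tau)^2\bigl(\beta(\tau)+\beta'(\tau)\bigr)-\int_0^{\beta(\tau)}\rho(\xi)\,w_\tau(\xi,\tau)^2\,\rmd\xi\le0,
\end{equation*}
the inequality being exactly where the positivity $\beta+\beta'>0$ from Step~1 enters, and $\Psi'(\tau)=0$ forcing $w_\tau\equiv0$ and $\beta'\equiv0$. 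Moreover $\Psi$ is bounded below: the last integral is nonnegative, and from $w(0,\tau)=\int_0^\beta(-w_\xi)\,\rmd\xi\le\beta^{1/2}\bigl(\int_0^\beta w_\xi^2\rho\bigr)^{1/2}$ together with $\beta\le2h+|E_0|$ and monotonicity of $\Psi$ one gets a uniform bound on $w(0,\tau)$, hence $\Psi(\tau)\searrow\Psi_\infty>-\infty$ and $\int_1^\infty\!\bigl(\tfrac12\rho(\beta)\beta'^2(\beta+\beta')+\int_0^\beta\rho w_\tau^2\bigr)\rmd\tau<\infty$.

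\medskip
\noindent\textbf{Step 4 (invariance principle and conclusion).} By Step~2 every sequence $\tau_n\to\infty$ has a subsequence along which $(w(\cdot,\tau_n),\beta(\tau_n))\to(w_\ast,\beta_\ast)$. Running the (well‑posed, by the uniqueness/continuous‑dependence part of the existence theory) rescaled flow from such limit points and using continuity of $\Psi$ together with $\Psi\equiv\Psi_\infty$ on the $\omega$‑limit set shows $\Psi'\equiv0$ along these orbits, hence $w_\tau\equiv0$, $\beta'\equiv0$, so $(w_\ast,\beta_\ast)$ is stationary and therefore equals $(U,\omega)$ by Corollary~\ref{c-sss}. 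As the $\omega$‑limit set reduces to $\{(U,\omega)\}$, $\beta(\tau)\to\omega$ and $w(\cdot,\tau)\to U$; the equicontinuity from Step~2 upgrades the latter to uniform convergence on $[0,\beta(\tau)]$, and for $\xi$ between $\beta(\tau)$ and $\omega$ one has $|w(\xi,\tau)-U(\xi)|=|U(\xi)-U(\omega)|\le\|U'\|_\infty|\beta(\tau)-\omega|\to0$, giving uniform convergence on $[0,\omega]$. Undoing the change of variables is statements 1) and 2).

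\medskip
\noindent\textbf{Main obstacle.} The delicate point is Step~3: guessing the correction $\tfrac18\int_0^\beta s^2\rho(s)\,\rmd s$ that cancels the indefinite free‑boundary term produced by the Dirichlet‑cum‑Stefan condition, and then verifying that the residual has a sign — which it does \emph{only} because $\beta+\beta'=\tfrac12\beta-w_\xi(\beta,\tau)>0$. A secondary, more technical difficulty is the uniform‑in‑time regularity of the free boundary, needed to justify the integrations by parts in Step~3 and the compactness and invariance arguments in Steps~2 and~4 up to the moving boundary.
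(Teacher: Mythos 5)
Your route is genuinely different from the paper's. The paper never introduces an energy: it constructs explicit time-independent lower and upper solutions $(U_\lambda,b_\lambda)$ and $(\upW,\upb)$, shows via the comparison principle that the orbits emanating from them are monotone in $\tau$, identifies their monotone limits with the unique steady state through a weak formulation tested against carefully chosen cut-off functions (precisely so that no regularity of $W_\tau$ or $W_{\eta\eta}$ up to the free boundary is ever required), and concludes with Dini's theorem plus a sandwich argument. Your Lyapunov/LaSalle scheme is sound in outline, and the central computation checks out: with $\rho=e^{\xi^2/4}$ one indeed obtains
\begin{equation*}
\Psi'(\tau)=-\tfrac12\rho(\beta)\,\beta'(\tau)^2\bigl(\beta+\beta'\bigr)-\int_0^{\beta}\rho\,w_\tau^2\,\rmd\xi,
\qquad \beta+\beta'=\tfrac{\beta}{2}-w_\xi(\beta(\tau),\tau)>0,
\end{equation*}
the lower bound for $\Psi$ via $w(0,\tau)\le\beta^{1/2}\bigl(\int_0^\beta w_\xi^2\rho\bigr)^{1/2}$ is correct, and so is the mass identity giving $\beta\le 2h+|E_0|$. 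What your argument buys is independence from order-preservation (it could survive perturbations that destroy the comparison principle); what it costs is exactly the technical burden the paper's method is engineered to avoid.

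The genuine gap is Step 2, on which everything else leans. The uniform-in-$\tau$ bounds for $w$ in $C^{2+\alpha,1+\alpha/2}$ up to the \emph{moving} boundary and for $\beta$ in $C^{1+\alpha/2}$ are not a citation-free fact: one must first bound $p=-w_\xi(\beta(\tau),\tau)$ uniformly (a barrier construction at the free boundary), then straighten the boundary and bootstrap Schauder estimates with constants independent of $\tau$. These bounds are also what legitimizes differentiating $\Psi$ and integrating by parts in Step 3, supplies the $C^1$-compactness needed for continuity of $\Psi$ on the $\omega$-limit set, and underlies the continuous dependence on initial data invoked in the LaSalle step. None of this is false, but as written it is the hardest part of the proof and is only asserted; the authors state explicitly that avoiding uniform control of the space derivative is the point of their approach. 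To complete your proof you must either supply these estimates (following, e.g., Friedman's regularity theory and the barrier arguments of \cite{BoHi2023}) or retreat to a weak formulation with cut-offs, replacing LaSalle by the finiteness of $\int\!\!\int \rho\,w_\tau^2$ together with uniqueness of the steady state. A minor point: uniqueness of the stationary profile is the content of Lemma \ref{self-sim-sol}, not of Corollary \ref{c-sss}.
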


Our method of proof is based upon recent results obtained by \cite{Bou}, \cite{BoHi2023}, who use the comparison principle in an essential way. The argument  dwells on the possibility of trapping a given solution to (\ref{NSP}) between two solutions with known time asymptotic behavior. In order to make this method work we transform (\ref{NSP}) to a problem on a bounded domain with the  help of similarity  variables. The self-similar solution of (\ref{NSP}) corresponds to the steady state solution of the transformed system. Its uniqueness is of crucial importance for the proof. 

Let us stress the main difference 
between \cite{Bou}, \cite{BoHi2023} and the present article. The authors of \cite{Bou} and \cite{BoHi2023} present a quite technical proof to show that the space derivative of the solution uniformly converges to its limit as $t\to\infty.$ Here, we completely avoid  such a claim, so that
our proof is simpler and more direct, which would make our method easier to adapt to a different setting.

We should point out that there are a number of results dealing with the asymptotic behavior of solutions of Stefan problems, mainly in the case of Dirichlet data on the fixed boundary. 
However, even for Dirichlet data, there are not so many articles besides \cite{Bou} and 
\cite{BoHi2023} simultaneously addressing the behavior of the temperature profile $u$ and the shape of the interface $s.$

\subsection{Existence and uniqueness of the solution}

Let us stress that (\ref{zalu0-D}) is our standing set of assumptions on the initial conditions. 
Moreover, the condition $u_0(0)>0$ is necessary to construct proper lower solutions, but is not needed in the Proposition below:
\begin{prop}
	Assume that the initial condition $u_0$ satisfies \eqref{zalu0-D} and  that $h>0$. Then, there exists a unique classical solution $(u,s)$ of Problem \eqref{NSP} for all $t>0$, in the following sense : 
	\begin{equation*}
			s \in {C^1}([0, \infty)),\quad  u \in C^{2,1}(\{(x,t): t>0, 0 < x < s(t)\}), 	\end{equation*} 
\begin{equation*}		u \in {C(\{(x,t): t \geq 0, 0 \leq x \leq s(t)\}}), u_x \in {C(\{(x,t): t>0, 0 \leq x \leq s(t)\})}.
			\end{equation*}
\end{prop}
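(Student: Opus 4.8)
The plan is to reduce Problem (\ref{NSP}) to a fixed-point problem for the free boundary $s$ and then invoke the classical theory of the one-phase Stefan problem. First I would fix a time horizon $T>0$ and work on $[0,T]$; since all the estimates I obtain are uniform in $T$ (because the Neumann datum $h/\sqrt{t+1}$ is bounded by $h$ and decays), a standard continuation argument then yields a global-in-time solution. The natural functional-analytic setting is the following: for a given curve $s$ in the class
\begin{equation*}
\mathcal{K}_T = \{ s \in C^1([0,T]) : s(0)=b_0,\ |\dot s(t)| \le M \text{ for all } t\in[0,T] \},
\end{equation*}
with $M$ a constant to be chosen, one first solves the linear heat equation (\ref{NSP}-$i$) in the (known) domain $\{0<x<s(t)\}$ with the Neumann condition $-u_x(0,t)=h/\sqrt{t+1}$ at $x=0$, the Dirichlet condition $u(s(t),t)=0$ on the moving boundary, and the initial datum $u_0$. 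This linear parabolic problem on a non-cylindrical but Lipschitz domain has a unique solution $u=u[s]$ with the stated regularity; one then defines the map $\mathcal{T}(s)$ to be the solution of $\dot{\tilde s}(t) = -u[s]_x(\tilde s(t),t)$, $\tilde s(0)=b_0$, i.e. condition (\ref{NSP}-$iii$). A fixed point of $\mathcal{T}$ is exactly a solution of (\ref{NSP}).

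The key steps, in order, are: (1) solvability and a priori estimates for the linear heat problem $u[s]$ — here one straightens the domain by the change of variables $y = x/s(t)$, turning (\ref{NSP}-$i$) into a uniformly parabolic equation on the fixed cylinder $(0,1)\times(0,T)$ with coefficients depending on $s,\dot s$, and applies $L^p$ or Schauder estimates; the maximum principle gives $0\le u[s]$, and the boundedness of $u_0$ together with $h>0$ gives $\|u[s]\|_\infty \le C(h, \|u_0\|_\infty)$ uniformly in $s\in\mathcal{K}_T$. (2) A one-sided gradient bound at the free boundary: comparing $u[s]$ with suitable barriers (linear-in-$x$ supersolutions near $x=s(t)$) shows $-C_1 \le u[s]_x(s(t),t) \le 0$, which both gives $\dot{\tilde s}\ge 0$ (the interface advances, consistent with melting) and the bound $|\dot{\tilde s}|\le C_1$, so that choosing $M=C_1$ makes $\mathcal{T}$ map $\mathcal{K}_T$ into itself. (3) Compactness/continuity of $\mathcal{T}$: by parabolic regularity $u[s]_x$ on the boundary is Hölder in $t$, so $\mathcal{T}(\mathcal{K}_T)$ is precompact in $C^1([0,T])$ and $\mathcal{T}$ is continuous, whence Schauder's fixed point theorem produces a solution on $[0,T]$. (4) Uniqueness: given two solutions $(u_1,s_1)$, $(u_2,s_2)$, subtract, multiply the difference equation by $u_1-u_2$ and integrate (a Friedman-type energy argument on the common domain, carefully handling the region between $s_1$ and $s_2$ where one of the functions vanishes), deducing $s_1\equiv s_2$ and then $u_1\equiv u_2$ on a maximal interval, which by openness-closedness is all of $[0,T]$.

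I expect the main obstacle to be step (2)–(3), i.e. obtaining the gradient bound at the free boundary together with enough regularity of $t\mapsto u[s]_x(s(t),t)$ to close the fixed-point argument, because the Neumann datum forces a nonzero flux into the domain and one must control how this propagates to the interface without yet knowing $s$ is smooth; the mild singularity of $h/\sqrt{t+1}$ at $t=0$ is harmless (it is bounded), but the corner compatibility at $(0,0)$ and at $(b_0,0)$ between the Neumann datum, the Dirichlet datum $u(s(t),t)=0$, and $u_0$ needs the hypotheses in (\ref{zalu0-D}) — in particular $u_0(x)=0$ on $[b_0,\infty)$ — to guarantee that $u_x$ extends continuously up to $t=0$ away from $x=0$ only, as stated. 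Since this existence-and-uniqueness result is classical and essentially due to Friedman and Cannon–Hill for the one-phase Stefan problem with Neumann data, the cleanest exposition is to verify that the hypotheses of the relevant theorem in \cite{Fr1988} (or \cite{Me1992}) are met and cite it, sketching only the adaptation needed to accommodate the time-dependent boundary flux and the time shift by $1$.
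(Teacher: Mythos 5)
The paper does not actually prove this proposition: it defers entirely to \cite[Chapter 8, Theorem 2]{Fr1964}, supplemented by \cite[Theorem 5.1]{An2004} to relax the $C^1$ requirement on $u_0$ to the Lipschitz class in \eqref{zalu0-D}. Your plan is sound in outline but follows a genuinely different classical route from the cited one. Friedman's argument represents $u$ by heat potentials and reduces \eqref{NSP} to a nonlinear Volterra integral equation for the flux $v(t)=u_x(s(t),t)$, solved by contraction on a short time interval and then continued globally using exactly the a priori facts you identify ($0\le u$ by the maximum principle, hence $u_x(s(t),t)\le 0$ and $\dot s\ge 0$, so $s(t)\ge b_0>0$ and the domain never degenerates); this method also delivers the continuity of $u_x$ up to the lateral boundary for $t>0$ rather directly. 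Your route --- straightening the domain via $y=x/s(t)$, solving the linear problem for $u[s]$, and closing with a Schauder fixed point on the class $\mathcal K_T$ --- is an equally standard alternative, at the cost of having to establish the barrier-based gradient bound and the compactness of $s\mapsto u[s]_x(s(\cdot),\cdot)$ yourself, and of a somewhat delicate energy/Gronwall argument for uniqueness on the symmetric difference of the two domains (Friedman instead gets uniqueness from the integral-equation formulation). Two small corrections: the proposition asserts continuity of $u_x$ only for $t>0$, so no corner compatibility at $(0,0)$ or $(b_0,0)$ is required --- interior-in-time parabolic smoothing suffices, which is precisely how \cite{An2004} permits $u_0\in W^{1,\infty}$ instead of $C^1$; and your closing suggestion to cite \cite{Fr1988} or \cite{Me1992} points to the right circle of ideas, but the statement as formulated (with the flux condition at $x=0$ and this regularity class) is the one from \cite{Fr1964}.
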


We refer to \cite[Chapter 8, Theorem 2]{Fr1964} for the proof of this proposition. 

Strictly speaking, the original 
statement in \cite{Fr1964} required $u_0$ to be of class $C^1$; however, we may relax this assumption in view of \cite[Theorem 5.1]{An2004}.

\bigskip
The organization of this paper is as follows. In Subsection 2.1 we discuss the
existence and uniqueness of the self-similar solution. Subsection 2.2 is devoted to the study of upper and lower solutions as well as to estimates following from monotonicity. In the last Section, Section 3, we present the proof of the convergence result which is based on the comparison principle.

\section{Self-similar, lower and  upper solutions}
\subsection{Self-similar solution}
We start by re-expressing Problem (\ref{NSP}) in terms of the
self-similar variables. In other words, we set
$$
W(\eta,\tau)=u(x,t)\quad\hbox{and} 
\quad b(\tau)=\frac{s(t)}{\sqrt{t+1}},
$$ 
where $\displaystyle{\eta=\frac{x}{\sqrt{t+1}}}$ 
and $\tau=\ln(t+1)$,  to obtain the problem 
\begin{equation}\label{NSP-W}
\begin{array}{cll}
(i) & W_\tau(\eta,\tau)-W_{\eta\eta}(\eta,\tau)-\displaystyle{\frac{ \eta}{2}} W_\eta (\eta,\tau)=0 & \tau > 0,\ 0<\eta<b(\tau),\\
(ii) &  -W_\eta(0,\tau)= h,\quad  W(b(\tau),\tau)= 0
  &  \tau > 0,\\
(iii)& \dot{b}(\tau)+\displaystyle{\frac{b(\tau)}{2}}=-W_\eta(b(\tau),\tau)  &  \tau > 0,\\
(iv) &  b(0)=b_0>0, \quad W(\eta,0)=u_0(\eta)& 0<\eta< b_0.
\end{array}
\end{equation}
Let us remark that the existence and uniqueness  of the stationary solution of problem \eqref{NSP-W}  were given in \cite{Ta1981}. 

\begin{lemma}\label{self-sim-sol} The  associated stationary problem to  \eqref{NSP-W}, which 
 is given by 
\begin{equation}\label{ODE-P}
\begin{array}{ll}
(i)& W_{\eta \eta}(\eta)+\displaystyle{\frac{\eta}{2}} W_\eta(\eta)=0, \quad  0<\eta<\omega,\\
(ii)& -W_\eta(0)= h,\quad  W(\omega)=0,\\
(iii)&\displaystyle{\frac{\omega}{2}}=- W_\eta(\omega).
\end{array}
\end{equation}
admits a unique solution given by the pair $(U,\omega)$ such that 
 \begin{equation}\label{sss}
    U(\eta) =  h
 \displaystyle{\int_\eta^\omega e^{-\frac{s^2}{4}}\,ds} , \qquad \eta\in [0,\omega]
\end{equation}
and  $\omega$ is the unique positive solution of the equation
$h= \displaystyle{\frac{x}{2} e^{\frac{x^2}{4}}} .$
\end{lemma}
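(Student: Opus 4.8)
The plan is to solve the ODE \eqref{ODE-P}(i) explicitly, then impose the boundary and Stefan conditions to pin down the constants, and finally show that the resulting transcendental equation for $\omega$ has exactly one positive root. First I would observe that \eqref{ODE-P}(i) can be written as $\left(e^{\eta^2/4}W_\eta\right)_\eta = e^{\eta^2/4}\left(W_{\eta\eta}+\tfrac{\eta}{2}W_\eta\right)=0$, so that $W_\eta(\eta) = C\,e^{-\eta^2/4}$ for some constant $C$. The Neumann condition \eqref{ODE-P}(ii), $-W_\eta(0)=h$, forces $C=-h$, hence $W_\eta(\eta)=-h\,e^{-\eta^2/4}$. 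Integrating from $\eta$ to $\omega$ and using $W(\omega)=0$ gives $W(\eta) = h\int_\eta^\omega e^{-s^2/4}\,ds$, which is exactly \eqref{sss}; note this is automatically nonnegative on $[0,\omega]$.

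Next I would use the Stefan condition \eqref{ODE-P}(iii). Since $W_\eta(\omega) = -h\,e^{-\omega^2/4}$, the requirement $\tfrac{\omega}{2} = -W_\eta(\omega)$ becomes $\tfrac{\omega}{2} = h\,e^{-\omega^2/4}$, equivalently $h = \tfrac{\omega}{2}e^{\omega^2/4}$, which is the stated equation for $\omega$ (with $x=\omega$). So the pair $(U,\omega)$ solves \eqref{ODE-P} if and only if $\omega$ is a positive root of $g(x) := \tfrac{x}{2}e^{x^2/4} = h$.

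The remaining point — and the only one requiring a genuine argument rather than a computation — is existence and uniqueness of a positive root of $g(x)=h$. Here I would note that $g$ is continuous on $[0,\infty)$ with $g(0)=0$ and $g(x)\to+\infty$ as $x\to+\infty$, giving existence by the intermediate value theorem; and that $g'(x) = \tfrac12 e^{x^2/4}\left(1+\tfrac{x^2}{2}\right) > 0$ for all $x>0$, so $g$ is strictly increasing, giving uniqueness. Since $h>0$ by hypothesis, there is exactly one such $\omega>0$. The main obstacle, if any, is purely bookkeeping: one should check that the profile $U$ so obtained indeed has the claimed regularity (it is smooth, being an integral of a smooth function) and verify that no degenerate solution with $\omega=0$ is being overlooked — but $\omega=0$ would force $h=0$, contradicting $h>0$, so the solution is genuinely unique. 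Assembling these observations yields the lemma, and the explicit characterization of $\omega$ also justifies the subsequent Corollary \ref{c-sss} on the self-similar solution of \eqref{NSP}.
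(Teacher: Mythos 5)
Your proof is correct and complete: the integrating-factor reduction, the determination of the constants from the Neumann and free-boundary conditions, and the monotonicity argument for $g(x)=\tfrac{x}{2}e^{x^2/4}$ are exactly the standard route. The paper itself omits the proof and refers to Tarzia (1981) for existence and uniqueness of the stationary solution, so your write-up simply supplies the direct computation that the paper delegates to that reference.
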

We immediately conclude from this result that
\begin{coro}\label{c-sss}
If we set $u(x,t) = U(\displaystyle{\frac x{\sqrt{t+1}}})$  and $\sigma = \omega \sqrt{t+1}$, then $(u,\sigma)$ is a solution to (\ref{NSP}$-i$)--(\ref{NSP}$-iii$).
\end{coro}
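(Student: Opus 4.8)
The plan is a direct verification by substitution, using Lemma \ref{self-sim-sol} to convert each line of (\ref{NSP}$-i$)--(\ref{NSP}$-iii$) into the corresponding line of the stationary system (\ref{ODE-P}). First I would introduce the similarity variable $\eta = x/\sqrt{t+1}$, so that $u(x,t) = U(\eta)$, and record the chain-rule identities
\begin{equation*}
\eta_x = \frac{1}{\sqrt{t+1}}, \qquad \eta_t = -\frac{\eta}{2(t+1)},
\end{equation*}
from which
\begin{equation*}
u_x = \frac{1}{\sqrt{t+1}}\, U'(\eta), \qquad u_{xx} = \frac{1}{t+1}\, U''(\eta), \qquad u_t = -\frac{\eta}{2(t+1)}\, U'(\eta).
\end{equation*}
Since $U$ is given explicitly by (\ref{sss}) it is smooth on $[0,\omega]$, so these computations are legitimate and $u$ has the required regularity on the moving domain $\{0<x<\sigma(t)\}$.

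The heat equation (\ref{NSP}$-i$) then follows from the algebraic cancellation
\begin{equation*}
u_t - u_{xx} = -\frac{1}{t+1}\left[ U''(\eta) + \frac{\eta}{2}\, U'(\eta)\right] = 0,
\end{equation*}
where the bracket vanishes by (\ref{ODE-P}$-i$); this is the one step where the precise form of the parabolic self-similar scaling matters, and it is the heart of the argument. Note that as $x$ ranges over $(0,\sigma(t))$ the variable $\eta$ ranges over $(0,\omega)$, which is exactly the interval on which (\ref{ODE-P}$-i$) holds, so the equation is verified on the full moving domain.

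It remains to match the boundary and free-boundary conditions. At $x=0$ we have $\eta=0$, so $-u_x(0,t) = -U'(0)/\sqrt{t+1} = h/\sqrt{t+1}$ by the first part of (\ref{ODE-P}$-ii$), which is (\ref{NSP}$-ii$). At $x=\sigma(t)=\omega\sqrt{t+1}$ we have $\eta=\omega$, hence $u(\sigma(t),t) = U(\omega) = 0$ by the second part of (\ref{ODE-P}$-ii$). Finally, differentiating $\sigma$ gives $\dot\sigma(t) = \omega/(2\sqrt{t+1})$, while $-u_x(\sigma(t),t) = -U'(\omega)/\sqrt{t+1}$, and these agree precisely because (\ref{ODE-P}$-iii$) asserts $-U'(\omega)=\omega/2$; this yields the Stefan condition (\ref{NSP}$-iii$). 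There is no genuine obstacle here: the whole content is carried by Lemma \ref{self-sim-sol}, and the corollary is simply the observation that the stationary system (\ref{ODE-P}) was constructed to be exactly the self-similar reduction of (\ref{NSP}$-i$)--(\ref{NSP}$-iii$).
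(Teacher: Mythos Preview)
Your proof is correct and is precisely the direct verification the paper has in mind when it says ``We immediately conclude from this result''; the paper gives no further argument beyond that phrase. You have simply written out the chain-rule computation that reverses the change of variables introduced just before Lemma~\ref{self-sim-sol}, and every step checks.
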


\subsection{ Lower and upper solutions}

Similarly as 
in \cite{BoHi2023} we define notions of lower and upper solutions of (\ref{NSP-W}).
\begin{defi}\label{low-sol} We say that  a pair 
of smooth functions $(\lowW,\lowb)$ (resp. $(\upW,\upb))$ is a lower (resp. upper) solution of Problem \eqref{NSP-W} if 
\begin{equation}\label{low-NSP-W}
\begin{array}{cll}
(i) & W_\tau(\eta,\tau)-W_{\eta\eta}(\eta,\tau)-\displaystyle{\frac{ \eta}{2}} W_\eta (\eta,\tau)\leq 0\quad (\text{resp.} \geq 0),  \, & \tau > 0, \ 0<\eta<b(\tau),\\
(ii) &  -W_\eta(0,\tau)\leq  
  h\quad (\text{resp.} \geq h), 
  &  \tau > 0,\\
(iii) & W(b(\tau),\tau)=  0 & \tau > 0,\\
(iv) & \dot{b}(\tau)+\displaystyle{\frac{b(\tau)}{2}}\leq    -W_\eta(b(\tau,\tau))\quad (\text{resp.} \geq -W_\eta(b(\tau,\tau)))\,  & \tau > 0,\\
(v) & b(0)\leq  b_0  \quad (\text{resp. }b(0)\geq b_0) & \\
(vi) & W(\eta,0)\leq  u_0(\eta)\quad (\text{resp. } W(\eta,0)\geq  u_0(\eta))   & 0<\eta<b(0).
\end{array}
\end{equation}
\end{defi}

The following comparison principle is a fundamental tool in our article. 

\begin{theo}\label{comparison} Let $(\lowW_1(\eta,\tau),\lowb_1(\tau))$ (respectively,  $(\upW_2(\eta,\tau),\upb_2(\tau))$) be the extensions by zero of the lower (respectively, upper solutions) of \eqref{NSP-W}
corresponding 
to the data $(h_1,{u_0}_1,{b_0}_1)$ (respectively, $(h_2,{u_0}_2,{b_0}_2)$). 
If  $h_1 \leq h_2$, ${u_0}_1\leq {u_0}_2$ and ${b_0}_1\leq {b_0}_2$, then $\lowb_1(\tau)\leq \upb_2(\tau)$ for every $\tau>0$ and $\lowW_1(\eta,\tau)\leq \upW_2(\eta,\tau)$ for every $\eta\geq 0$ and $\tau\geq 0$.
\end{theo}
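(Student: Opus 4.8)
\emph{Strategy.} The plan is to reduce everything to the ordering of the free boundaries $\lowb_1\le\upb_2$, because once this holds on a time interval $[0,T]$ the ordering $\lowW_1\le\upW_2$ on $[0,T]$ will follow from the parabolic maximum principle on the common cylinder $\{0<\eta<\lowb_1(\tau)\}$, and then, from the extension by zero together with the (separately checked) positivity $\upW_2\ge0$, it will hold for all $\eta\ge0$ and $\tau\in[0,T]$. Since the two orderings feed into each other, I would decouple them by a first-touching-time argument: set $\tau_1:=\sup\{T\ge0:\ \lowb_1\le\upb_2\ \text{on}\ [0,T]\}$, which is well defined and $\ge0$ because \eqref{low-NSP-W}$(v)$ and ${b_0}_1\le{b_0}_2$ give $\lowb_1(0)\le{b_0}_1\le{b_0}_2\le\upb_2(0)$, and then argue by contradiction assuming $\tau_1<\infty$.

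\emph{Comparison of $W$ up to $\tau_1$.} First I would record that $\upW_2>0$ in $\{0<\eta<\upb_2(\tau)\}$: the operator $L:=\partial_\tau-\partial_{\eta\eta}-\tfrac{\eta}{2}\partial_\eta$ carries no zeroth-order term, so a minimum of the supersolution $\upW_2$ lies on the parabolic boundary, where it is $\ge{u_0}_2\ge0$ on $\{\tau=0\}$ and is $0$ on $\{\eta=\upb_2(\tau)\}$, while on $\{\eta=0\}$ a negative minimum would contradict Hopf's lemma because \eqref{low-NSP-W}$(ii)$ forces $\partial_\eta\upW_2(0,\tau)\le-h_2<0$; strict positivity then follows from the strong maximum principle. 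Next, on the bounded cylinder $\Omega_{\tau_1}:=\{0<\eta<\lowb_1(\tau),\,0<\tau<\tau_1\}$ — on which $\lowb_1\le\upb_2$, so that one stays inside the smooth part of $\upW_2$ and never meets the kink in its zero-extension — the function $z:=\lowW_1-\upW_2$ satisfies $Lz\le0$ by \eqref{low-NSP-W}$(i)$, with $z\le{u_0}_1-{u_0}_2\le0$ on $\{\tau=0\}$ by \eqref{low-NSP-W}$(vi)$, $z=-\upW_2\le0$ on $\{\eta=\lowb_1(\tau)\}$, and $-\partial_\eta z(0,\tau)\le h_1-h_2\le0$ on $\{\eta=0\}$ by \eqref{low-NSP-W}$(ii)$, so that a positive maximum of $z$ on that edge is excluded by Hopf's lemma. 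The maximum principle then gives $z\le0$ on $\overline{\Omega_{\tau_1}}$, hence $\lowW_1\le\upW_2$ for all $\eta\ge0$ and $\tau\in[0,\tau_1]$.

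\emph{The touching-time contradiction.} By continuity $\lowb_1(\tau_1)=\upb_2(\tau_1)=:\xi$, and by the definition of $\tau_1$ there are $\tau_n\downarrow\tau_1$ with $\lowb_1(\tau_n)>\upb_2(\tau_n)$; as $\lowb_1-\upb_2$ is $C^1$ and vanishes at $\tau_1$, this yields $\dot{\lowb}_1(\tau_1)-\dot{\upb}_2(\tau_1)\ge0$. On the other hand $z(\cdot,\tau_1)\le0$ on $[0,\xi]$ with $z(\xi,\tau_1)=0$, so $\partial_\eta z(\xi,\tau_1)\ge0$, and subtracting the two conditions \eqref{low-NSP-W}$(iv)$ at $\tau=\tau_1$ (the $\lowb_1/2$ and $\upb_2/2$ terms cancel) gives $\dot{\lowb}_1(\tau_1)-\dot{\upb}_2(\tau_1)\le-\partial_\eta z(\xi,\tau_1)\le0$. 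Hence $\dot{\lowb}_1(\tau_1)=\dot{\upb}_2(\tau_1)$ and $\partial_\eta z(\xi,\tau_1)=0$: the subsolution $z$ attains its maximum value $0$ at the boundary point $(\xi,\tau_1)$ with vanishing spatial derivative, which by Hopf's lemma forces $z\equiv0$ on $\Omega_{\tau_1}$.

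\emph{The main obstacle.} The hard part is precisely this degenerate alternative $z\equiv0$. I would dispose of it by noting that $z\equiv0$, together with $\upW_2>0$ inside, forces $\lowb_1=\upb_2$ on $[0,\tau_1]$, and then, inserting these equalities into \eqref{low-NSP-W}$(ii)$, $(v)$, $(vi)$, that $h_1=h_2$, ${u_0}_1={u_0}_2$, ${b_0}_1={b_0}_2$ and that on $[0,\tau_1]$ the pair $(\lowW_1,\lowb_1)=(\upW_2,\upb_2)$ is nothing but the classical solution furnished by the existence-uniqueness result of Subsection~1.2; an iteration of the preceding argument from the instant $\tau_1$ (at which the two configurations agree and $\dot{\lowb}_1(\tau_1)=\dot{\upb}_2(\tau_1)$), together with uniqueness of that classical solution, then shows the agreement cannot break down at $\tau_1$, contradicting the maximality of $\tau_1$. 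Therefore $\tau_1=\infty$, which establishes $\lowb_1\le\upb_2$ on $[0,\infty)$, and, with the second step, $\lowW_1\le\upW_2$ for all $\eta\ge0$ and $\tau\ge0$.
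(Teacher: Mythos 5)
The paper itself does not prove Theorem~\ref{comparison}; it refers to \cite[Lemma 2.2]{DU2015} and \cite[Lemma 3.5]{DU2010}, whose proofs use exactly your strategy (first touching time of the free boundaries, comparison of $z=\lowW_1-\upW_2$ on the common cylinder, Hopf's lemma at the touching point and at $\eta=0$). Your non-degenerate analysis is correct: the positivity of $\upW_2$, the sign of $z$ up to $\tau_1$, the inequality $\dot{\lowb}_1(\tau_1)\ge\dot{\upb}_2(\tau_1)$ from the crossing, and the opposite inequality from subtracting the two Stefan conditions are all sound, and you have correctly isolated the only remaining alternative, $z\equiv 0$ on $\Omega_{\tau_1}$.

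The gap is in how you dispose of that alternative. Granting that $z\equiv0$ forces $(\lowW_1,\lowb_1)=(\upW_2,\upb_2)$ to be the classical solution on $[0,\tau_1]$, this tells you nothing for $\tau>\tau_1$: there the two pairs are still only a lower and an upper solution, so uniqueness of the classical solution is not applicable, and ``restarting the argument at $\tau_1$'' reproduces verbatim the configuration you started from (equal free boundaries and equal profiles at the initial instant), so the new touching time may again equal the new initial time and no progress is made. The same defect already appears at $\tau_1=0$ when ${b_0}_1={b_0}_2$ and $\lowb_1(0)=\upb_2(0)$: then $\Omega_{\tau_1}$ is empty and the Hopf argument has no domain to act on. The standard repair --- the one used in the cited references --- is an approximation/strictness argument: either compare with a perturbed upper solution (e.g.\ the one built from $h_2+\e$ and ${b_0}_2+\e$, for which the touching-point Hopf argument yields a strict contradiction because $z\not\equiv0$) and let $\e\to0^+$, or, in the situations where the theorem is actually applied in this paper (one of the two objects is a genuine solution), perturb the initial data of the solution side and invoke continuous dependence. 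Without such a step the proof is incomplete.
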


\begin{proof}
The proof is rather similar to those presented by \cite[Lemma 2.2 and Remark 2.3]{DU2015} and  \cite[Lemma 3.5]{DU2010}. We omit it here. 
\end{proof}
In fact, we will
construct lower and upper solutions,  which are independent of time. For this purpose, we present the perturbed stationary problem 
\begin{equation}\label{W-lambda-PB}
\begin{array}{cll}
(i) & W_{\eta\eta}(\eta)+\frac12\lambda \eta
W_\eta (\eta)=0 & 0<\eta<b_\la, \\
(ii) &  -W_\eta(0)= \tilde{h}, 
  &  W(b_\la)= 0,\\
(iii) & \displaystyle{\frac{b_\la}{2}}=-W_\eta(b_\la).  & 
\end{array}
\end{equation}
whose solution is given by the pair $(U_\la,b_\la)$, where
$U_\lambda(\eta)= \tilde{h}  
\int_\eta^{b_\la} e^{-\lambda s^2/4}\rmd s, 
$
for given  $\lambda$, and  
$b_\la$ is the 
unique solution to
\begin{equation}\label{b_la} \tilde{h}
=\frac{b_\la}{2}\,e^{\lambda b_\la^2/4}.
\end{equation}

\begin{remark}\label{U-la-props}It is easy to see that for every  $\lambda>0$ and $\eta$ in $(0,b_\la)$,
$U_\lambda\geq 0$, $
(U_\lambda)_\eta < 0$ and 
$(U_\lambda)_{\eta\eta}> 0$. 
In particular, $U_\lambda$ is a linear function for $\lambda=0$, and it is a strictly convex function if $\lambda>0$.
\end{remark} 
\begin{lemma}\label{LowUp} Let $(U_\lambda,b_\lambda)$ be a solution of 
\eqref{W-lambda-PB}. 
If $\tilde{h}
\leq h$, then for all $\lambda > 1$  
$(U_\lambda,b_\lambda)$ is an independent of time lower solution to \eqref{NSP-W}. 
\end{lemma}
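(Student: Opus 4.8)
The plan is to verify each of the six conditions in Definition \ref{low-sol} for the time-independent pair $(U_\lambda, b_\lambda)$, so that $W_\tau \equiv 0$ and the work reduces to comparing the elliptic operator, the boundary fluxes, and the Stefan condition. First I would observe that since $U_\lambda$ does not depend on $\tau$, condition (iv) of Definition \ref{low-sol} becomes simply $\frac{b_\lambda}{2} \le -W_\eta(b_\lambda)$, which holds with equality by \eqref{W-lambda-PB}(iii); likewise condition (iii), $W(b_\lambda)=0$, is exactly \eqref{W-lambda-PB}(ii), and condition (ii), $-W_\eta(0) = \tilde h \le h$, is the standing hypothesis. Conditions (v) and (vi) concern only the comparison at $\tau=0$; I will remark that they are handled by choosing $\tilde h$, and hence $b_\lambda$, small enough (or by noting that the lower solution is to be compared against the data as in Theorem \ref{comparison}) — in any case these are not constraints that force $\lambda>1$, so I would treat them briefly.

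The crux is condition (i): I must show that along $U_\lambda$ one has
\begin{equation*}
-(U_\lambda)_{\eta\eta}(\eta) - \frac{\eta}{2}(U_\lambda)_\eta(\eta) \le 0, \qquad 0<\eta<b_\lambda.
\end{equation*}
From \eqref{W-lambda-PB}(i) we know $(U_\lambda)_{\eta\eta} = -\frac{\lambda\eta}{2}(U_\lambda)_\eta$, so the left-hand side equals $\frac{\lambda\eta}{2}(U_\lambda)_\eta - \frac{\eta}{2}(U_\lambda)_\eta = \frac{\eta}{2}(\lambda - 1)(U_\lambda)_\eta$. By Remark \ref{U-la-props}, $(U_\lambda)_\eta < 0$ on $(0,b_\lambda)$, hence this expression is $\le 0$ precisely when $\lambda \ge 1$; in fact it is strictly negative for $\lambda>1$, which is the stated hypothesis. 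This single sign computation is where the condition $\lambda>1$ enters, and it is the step I expect to be the "main obstacle" only in the bookkeeping sense — conceptually it is a one-line estimate, but one must be careful that the perturbed elliptic operator in \eqref{W-lambda-PB} uses the coefficient $\frac{\lambda\eta}{2}$ while the parabolic operator in \eqref{NSP-W} uses $\frac{\eta}{2}$, so the discrepancy is exactly $(\lambda-1)\frac{\eta}{2} W_\eta$.

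Finally I would assemble the pieces: having checked (i) (using $\lambda>1$ and $(U_\lambda)_\eta<0$), (ii) (using $\tilde h \le h$), (iii) and (iv) (holding with equality by construction of $(U_\lambda,b_\lambda)$), and (v)--(vi) (by the placement of the lower solution relative to the initial data), we conclude that $(U_\lambda,b_\lambda)$ satisfies all requirements of Definition \ref{low-sol} and is therefore a time-independent lower solution of \eqref{NSP-W}, which is the assertion of the lemma. I would also note in passing that the borderline case $\lambda=1$ fails to give strict inequality only at the level of the PDE but still yields a (non-strict) lower solution, which explains why the statement is phrased with $\lambda>1$ for the constructions used later.
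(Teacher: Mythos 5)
Your verification of condition (i) of Definition \ref{low-sol} is exactly the paper's computation: the discrepancy between the operators is $\frac{\eta}{2}(\lambda-1)(U_\lambda)_\eta\le 0$ by Remark \ref{U-la-props}, and conditions (ii)--(iv) hold by construction and the hypothesis $\tilde h\le h$. That part is fine.

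The genuine gap is your treatment of conditions (v) and (vi), which you dismiss as bookkeeping "handled by choosing $\tilde h$ small enough" or "by the placement of the lower solution relative to the initial data." These two conditions are in fact the substance of the lemma's proof, and they are where the standing hypothesis $u_0(0)>0$ from \eqref{zalu0-D} is used (the paper points out in Subsection 1.2 that this hypothesis exists precisely to make lower solutions constructible). Concretely, one must show (v) $b_\lambda\le b_0$ and (vi) $U_\lambda\le u_0$ on $[0,b_\lambda]$, and neither follows from the definitions: the paper derives from \eqref{b_la} and $e^x>x$ that $b_\lambda\le 8\tilde h/(\lambda b_\lambda^2)$, hence $b_\lambda\to 0$ as $\lambda\to\infty$, which gives (v) for $\lambda$ large; then for (vi) it combines the lower bound $u_0(\eta)\ge u_0(0)-M\eta$ (from $u_0\in W^{1,\infty}$) with the strict convexity of $U_\lambda$, which yields $U_\lambda(\eta)<U_\lambda(0)\left(1-\eta/b_\lambda\right)$, together with $U_\lambda(0)\le\tilde h\, b_\lambda\to 0$; choosing $\lambda$ so large that $b_\lambda<u_0(0)/M$ and $U_\lambda(0)<u_0(0)$ then gives $U_\lambda\le u_0$. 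Your parenthetical appeal to Theorem \ref{comparison} is circular here: that theorem takes the ordering of the initial data as a hypothesis, it does not supply it. Note also that this analysis shows the lemma is really about $\lambda$ \emph{sufficiently large}, not about every $\lambda>1$; your claim that (v)--(vi) "are not constraints that force $\lambda>1$" obscures the fact that they do force a further largeness condition on $\lambda$ (or smallness of $\tilde h$), without which the pair $(U_\lambda,b_\lambda)$ need not sit below $(u_0,b_0)$ at all.
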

\begin{proof} 
First we show that if $\lambda > 1$, solutions of \eqref{W-lambda-PB} are lower solutions. Indeed 
\begin{equation}
- \{({U_\lambda})_{\eta\eta}+ \displaystyle{\frac{\eta}{2}} ({U_\lambda})_\eta \} = 
-\{ ({U_\lambda})_{\eta\eta}+\lambda \displaystyle{\frac{\eta}{2}} ({U_\lambda})_\eta\}  - \displaystyle{\frac{\eta - \lambda \eta}{2}} ({U_\lambda})_\eta = \displaystyle{\frac{\eta (\lambda - 1)}{2}} ({U_\lambda})_\eta < 0,
\end{equation}
for all $\eta \in (0, b_\lambda)$. Now, from \eqref{b_la} and   the inequality $e^x > x$ for all $x>0$,  it holds that
$0< b_\la=2 \tilde{h} e^{\la b^2_\la/4} \leq \displaystyle{\frac{8 \tilde{h}}{\la b^2_\la}}$, 
which implies that $b_\lambda\to0$ as $\la\to \infty$; thus we can choose $\la$ large enough so that $b_\la<b_0$. 

Next we show that we can choose $\lambda$ such that 
$U_\lambda \leq u_0$. On the one hand we have that  
$$ u_0(\eta)=u_0(0)+\int_0^\eta u_0'(s)\, d s \geq u_0(0)-M\eta, \quad \text{for all }\, 0\leq \eta\leq b_0. $$
On the other hand, for every $0<\eta<b_\la$, 
we deduce from the strict convexity of $U_\lambda$ discussed in Remark \ref{U-la-props} that
\begin{equation}\label{Ula-inic-cond} 
	U_\lambda(\eta) <  U_\lambda(0) +\frac{U_\lambda(b_\la)-U_\lambda(0)}{b_\la}\eta =U_\lambda(0) -\frac{U_\la(0)}{b_\la}\eta. \end{equation}
Also we remark that
\begin{equation}\label{U_la(0)to0}
U_\la(0)= \tilde{h}
\int_0^{b_\la}e^{-\la s^2}\rmd s\leq  \tilde{h} 
b_\la\to 0, \quad \la\to \infty,
\end{equation}
and recall that by the hypothesis \eqref{zalu0-D} $u_0(0) > 0$.
Thus, if we choose $\la$ large enough so that $b_\la<\displaystyle{\frac{u_0(0)}{M}}$ and $U_\la(0)<u_0(0),$ it follows that 
\begin{equation}\label{U_la<u0}  U_\lambda(\eta) \leq u_0(\eta) \quad \hbox{for all } \eta \in [0,b_\la]. \end{equation}
We conclude that $(U_\la,b_\la)$ is a lower solution according to Definition \ref{low-sol}.
\end{proof}

\begin{figure}[h]
\includegraphics[scale=0.8]{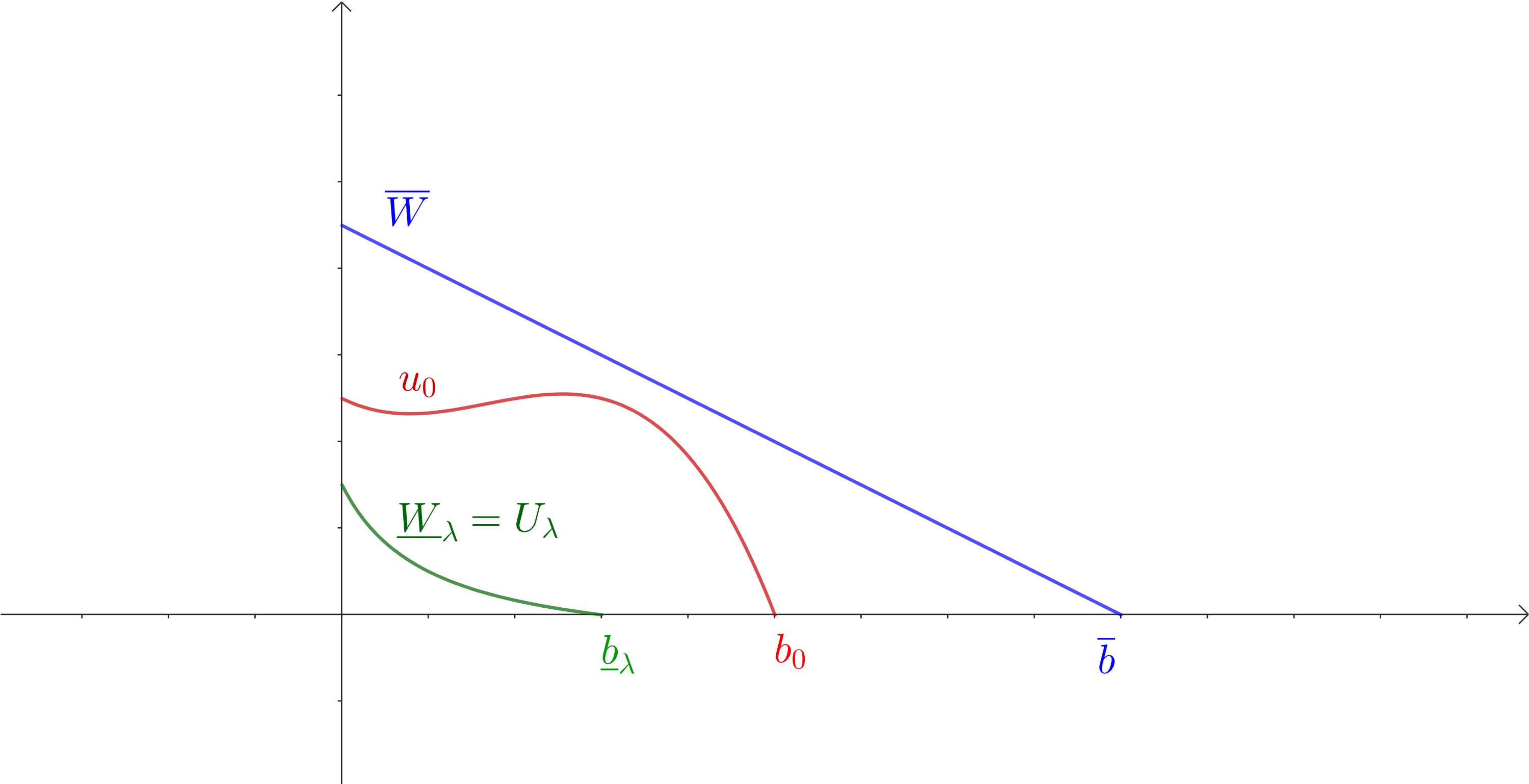}
\caption{Lower and upper solutions}
\end{figure}

Now, 
we define  the pair $(\lowW_\la,\lowb_\la)$ by 
\begin{equation}\label{lowSol}
\lowb_\la=b_\la \quad \text{and}\quad \lowW_\la (\eta):=\begin{cases}U_\la(\eta) & \text{ if } 0\leq \eta\leq \lowb_\la,\\ 0 &  \text{ if } \eta > \lowb_\la  \end{cases}.
\end{equation}
Next we propose an upper solution which is a straight line on its support. It is easy to verify that the pair $(\upW,\upb)$ defined by 
\begin{equation}\label{upSol}
\upb\geq b_0 \, \text{ with }\, \upb\geq 2h \quad \text{and}\quad \upW (\eta):=\begin{cases} \displaystyle{\frac{\upb}{2}}(\upb-\eta), & \text{ if } 0\leq \eta\leq \upb,\\ 0 &  \text{ if } \eta > \upb  \end{cases}
\end{equation}
is an upper solution. Next, we give an additional condition in order to ensure that $U_0 \geq u_0$ on the interval $[0, b_0]$. Since 
$$
u_0(\eta) \leq M (b_0 - \eta)  \mbox{ for all } \eta \in (0, b_0),
$$ 
we deduce that if $\upb  \geq \sqrt {2M b_0}$,
$$
U_0(\eta) \geq u_0 (\eta) \mbox{ for all } \eta \in (0,b_0),
$$
which implies a similar property for $\overline W$.

\section{Convergence}
In this section, the pair $(\lowW_\la,\lowb_\la)$ is the lower solution for a fixed $\la$ given by \eqref{lowSol} and $(\upW,\upb)$ is the upper solution given in \eqref{upSol}. 
We shall write
\begin{equation}\label{Sol-for-Low}
\low{W}(\eta,\tau):=W(\eta,\tau,(\lowW_\la,\lowb_\la)), \quad  \lowb(\tau)=b(\tau,(\lowW_\la,\lowb_\la))
\end{equation}
and 
\begin{equation}\label{Sol-for-Up}{\overline W}(\eta,\tau):=W(\eta,\tau,(\upW,\upb)), \quad  \upb(\tau)=b(\tau,(\upW,\upb))
\end{equation}
to denote solutions of (\ref{NSP-W}).

\begin{lemma}\label{Boundedness} \textsc{Positivity and boundedness}\\ 
There holds :
$$
0 \leq {\underline W}_\lambda(\eta)\leq {\underline W(\eta, \tau)} \leq W(\eta, \tau, u_0, b_0) \leq {\overline W(\eta, \tau)} \leq {\overline W}(\eta) \leq \displaystyle{\frac{\upb^2}{2}},
$$
and 
$$
0 \leq b_\lambda \leq b(\tau, u_0, b_0) \leq  \upb.
$$
\end{lemma}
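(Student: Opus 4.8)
The plan is to obtain Lemma~\ref{Boundedness} as a direct consequence of the comparison principle, Theorem~\ref{comparison}, once we have identified the right data to compare. The key observation is that we have three solutions of \eqref{NSP-W} in play, all with the same flux parameter $h$: the solution $(\lowW,\lowb)$ generated by the time-independent lower solution $(\lowW_\la,\lowb_\la)$ of \eqref{lowSol}, the solution $(W,s)$ generated by the genuine initial data $(u_0,b_0)$, and the solution $(\upW,\upb)$ generated by the time-independent upper solution $(\upW,\upb)$ of \eqref{upSol}. By Lemma~\ref{LowUp} (applied with $\tilde h = h$, $\lambda>1$) the pair $(\lowW_\la,\lowb_\la)$ is a lower solution to \eqref{NSP-W}, and by the paragraph following \eqref{upSol} the pair $(\upW,\upb)$ is an upper solution, provided $\upb\ge b_0$, $\upb\ge 2h$ and $\upb\ge\sqrt{2Mb_0}$; fix such a $\upb$. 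Crucially, the initial-time ordering forced by \eqref{U_la<u0}--\eqref{lowSol} and by the choice of $\upb$ gives $\lowW_\la\le u_0\le \upW$ on $[0,\infty)$ (after extension by zero) and $\lowb_\la=b_\la\le b_0\le \upb$.

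With these ingredients the proof is essentially bookkeeping. First I would apply Theorem~\ref{comparison} with the lower solution $(\lowW_\la,\lowb_\la)$ on data $(h,\lowW_\la(\cdot),b_\la)$ and the upper solution $(W,s)$ viewed as an ``upper solution of itself'' on data $(h,u_0,b_0)$: since $h\le h$, $\lowW_\la\le u_0$, $b_\la\le b_0$, we conclude $\lowb(\tau)\le s(\tau)$ and $\lowW(\eta,\tau)\le W(\eta,\tau,u_0,b_0)$ for all admissible $\eta,\tau$. Here one uses that any genuine solution of \eqref{NSP-W} is simultaneously a lower and an upper solution in the sense of Definition~\ref{low-sol} (all inequalities in \eqref{low-NSP-W} hold with equality). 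Symmetrically, applying Theorem~\ref{comparison} to $(W,s)$ as a lower solution on $(h,u_0,b_0)$ against the upper solution $(\upW,\upb)$ on $(h,\upW(\cdot),\upb)$ yields $s(\tau)\le\upb(\tau)$ and $W(\eta,\tau,u_0,b_0)\le\upW(\eta,\tau)$. Next, comparing the stationary lower solution against the evolving solution it generates — i.e.\ $(\lowW_\la,\lowb_\la)$ against $(\lowW,\lowb)$ with identical data — gives $\lowW_\la(\eta)\le\lowW(\eta,\tau)$ and $b_\la\le\lowb(\tau)$; and likewise $\upW(\eta,\tau)\le\upW(\eta)$, $\upb(\tau)\le\upb$. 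Chaining these inequalities produces the full string $0\le\lowW_\la\le\lowW\le W\le\upW\le\upW\le \upb^2/2$, the last step being the trivial bound $\frac{\upb}{2}(\upb-\eta)\le\frac{\upb^2}{2}$; the analogous chain $0\le b_\la\le b(\tau,u_0,b_0)\le\upb$ for the free boundaries. Positivity $\lowW_\la\ge0$ is part of Remark~\ref{U-la-props} (or \eqref{sss}), and $b_\la>0$ follows from \eqref{b_la}.

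The main obstacle, and the only genuinely non-routine point, is the first reduction: one must be careful that a classical solution of \eqref{NSP-W} qualifies as both a sub- and a super-solution in the precise sense required by Theorem~\ref{comparison}, and that Theorem~\ref{comparison} as stated (which compares a lower solution of one set of data with an upper solution of another) really does apply when one of the two ``solutions'' is the honest solution $(W,s)$. Inspecting Definition~\ref{low-sol}, every line \eqref{low-NSP-W}(i)--(vi) is satisfied with equality by $(W,s)$, so it is trivially a lower solution and an upper solution; thus the application is legitimate. A secondary, purely technical, point is to verify that the set of $\upb$ satisfying all three constraints $\upb\ge b_0$, $\upb\ge 2h$, $\upb\ge\sqrt{2Mb_0}$ is nonempty — it obviously is, since all three are lower bounds — and that the fixed $\lambda>1$ from Lemma~\ref{LowUp} can simultaneously be taken large enough to enforce $b_\la<b_0$ and $U_\la(0)<u_0(0)$, which was already arranged in the proof of Lemma~\ref{LowUp}. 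Once these matters of definition-chasing are settled, assembling the displayed inequalities is immediate.
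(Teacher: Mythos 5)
Your proposal is correct and takes exactly the paper's route: the paper's entire proof of Lemma~\ref{Boundedness} is the single sentence ``Repeatedly apply the comparison principle Theorem~\ref{comparison}{}'', and you have simply written out the bookkeeping — identifying $(\lowW_\la,\lowb_\la)$ and $(\upW,\upb)$ as time-independent lower/upper solutions, noting that a genuine solution satisfies Definition~\ref{low-sol} with equality, and chaining the resulting comparisons. The details you supply (including the observation that $\lowW_\la\le\lowW(\cdot,\tau)$ comes from comparing the stationary lower solution against the flow it generates, as in \eqref{dec-1}) are all consistent with how the paper uses these facts elsewhere.
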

\begin{proof}
Repeatedly apply the comparison principle Theorem \ref{comparison}.
\end{proof}
\begin{lemma}\cite{BoHi2023}\label{W-b-mono} \textsc{Monotonicity in time}\\ 
a) The functions $\low{W}(\eta,\tau)$ and $\lowb(\tau)$, are non-decreasing  in time.\\
b) The functions ${\overline W}(\eta,\tau)$ and $\upb(\tau)$, are non-increasing  in time.
\end{lemma}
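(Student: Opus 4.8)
The plan is to exploit the autonomy in $\tau$ of Problem \eqref{NSP-W} together with the comparison principle, Theorem \ref{comparison}, using the fact that the initial data for $\low W$ and $\lowb$ are themselves a lower solution (and likewise for the upper solution). First I would treat part (a). Fix $\delta>0$ and consider the time-shifted pair $\low W^\delta(\eta,\tau):=\low W(\eta,\tau+\delta)$, $\lowb^\delta(\tau):=\lowb(\tau+\delta)$. Because the equations \eqref{NSP-W}(i)--(iii) contain no explicit dependence on $\tau$, the shifted pair $(\low W^\delta,\lowb^\delta)$ is again a solution of \eqref{NSP-W} with the same flux $h$ but with initial data $(\low W(\cdot,\delta),\lowb(\delta))$ at $\tau=0$. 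The comparison principle then reduces the claim $\low W(\eta,\tau)\le \low W(\eta,\tau+\delta)$ and $\lowb(\tau)\le\lowb(\tau+\delta)$ to the single inequality at the initial time, namely
\[
\lowW_\la(\eta)\le \low W(\eta,\delta)\quad\text{and}\quad b_\la\le \lowb(\delta).
\]

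This initial-time inequality is exactly the content of Lemma \ref{Boundedness} (with the roles of $\tau=0$ and $\tau=\delta$), or can be derived directly from it: the time-independent pair $(\lowW_\la,b_\la)$ is a lower solution by Lemma \ref{LowUp}, while $(\low W(\cdot,\delta),\lowb(\delta))$ is the value at time $\delta$ of the solution emanating from $(\lowW_\la,b_\la)$, hence dominates it by Theorem \ref{comparison} applied on the time interval $[0,\delta]$ with data $(\lowW_\la,b_\la)\le(\low W(\cdot,0),\lowb(0))=(\lowW_\la,b_\la)$ — more precisely, one compares the solution $(\low W,\lowb)$ with the stationary lower solution $(\lowW_\la,b_\la)$ itself, which gives $\lowW_\la(\eta)\le\low W(\eta,\tau)$ for all $\tau\ge 0$, in particular at $\tau=\delta$. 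Since $\delta>0$ is arbitrary, the functions $\low W(\eta,\cdot)$ and $\lowb(\cdot)$ are non-decreasing. Part (b) is entirely symmetric: the shifted upper solution $(\uW^\delta,\upb^\delta):=(\uW(\cdot,\tau+\delta),\upb(\tau+\delta))$ solves \eqref{NSP-W} with initial data $(\uW(\cdot,\delta),\upb(\delta))$, and since $(\upW,\upb)$ of \eqref{upSol} is a time-independent upper solution, Theorem \ref{comparison} gives $\uW(\eta,\delta)\le\upW(\eta)=\uW(\eta,0)$ and $\upb(\delta)\le\upb=\upb(0)$, whence non-increasingness follows by applying the comparison principle once more on $[\delta,\infty)$.

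The only genuinely delicate point is the first step: one must be sure that a time-translate of a \emph{solution} is still a solution with the prescribed Neumann flux. Here this is immediate because the flux in \eqref{NSP-W}(ii) is the constant $h$ (the decaying flux $h/\sqrt{t+1}$ of the original problem \eqref{NSP} became time-independent precisely under the self-similar change of variables), so no subtlety arises — this is exactly where the passage to similarity variables pays off. Everything else is a direct invocation of Theorem \ref{comparison} and Lemma \ref{LowUp}/\eqref{upSol}. Since this lemma is attributed to \cite{BoHi2023}, I would simply indicate that the argument is the one just sketched and refer to that work for the details.
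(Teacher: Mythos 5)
Your proposal is correct and follows essentially the same route as the paper: first compare the solution with the time-independent lower (resp.\ upper) solution to get the ordering at time $\delta$ (the paper's $\sigma$), then use the autonomy of \eqref{NSP-W} in $\tau$ together with uniqueness to identify the time-shifted solution with the solution emanating from the shifted data, and conclude by a second application of Theorem \ref{comparison}. The paper's proof is exactly this argument written out for part (a), with part (b) left as symmetric.
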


\begin{proof} We only prove part $a)$. For the sake of simplicity of notation we suppress the lower bar and we write $W$. 
From Theorem \ref{comparison} we deduce that 
\begin{equation}\label{dec-1}
     W(\eta,s,(\lowW_\la,\lowb_\la))\geq \lowW_\la(\eta) \quad \text{and } \quad b(s,(\lowW_\la,\lowb_\la))\geq \lowb_\la, \quad \text{for all } \, s \geq 0.
\end{equation}
Now, for a fixed $s=\sigma$, we consider the pair $(W^\sigma,b^\sigma)$ where 
\begin{equation}\label{sigma-initial}
    W^\sigma(\eta)=W(\eta,\sigma,(\lowW_\la,\lowb_\la)) \quad \text{and }\quad b^{\sigma}=b(\sigma,(\lowW_\la,\lowb_\la)).
\end{equation}
In particular we have 
$W^\sigma(\eta)\geq \lowW_\la(\eta) \quad \text{and }\quad b^{\sigma}\geq \lowb_\la.$
Then we apply again Theorem \ref{comparison} to deduce that for every $ \tau \geq 0$
\begin{equation}\label{dec-3}
    W(\eta,\tau,(W^\sigma,b^\sigma)\geq W(\eta,\tau,(\lowW_\la,\lowb_\la))  \quad \text{and }\quad b(\tau,(W^\sigma,b^\sigma))\geq b(\tau,(\lowW_\la,\lowb_\la)).
\end{equation}
Returning to \eqref{dec-1}, now consider  $s=\tau+\sigma$ for $\tau\geq 0$. It holds that the pair 
 $(W(\eta,\tau+\sigma,(\lowW_\la,\lowb_\la)),b(\tau+\sigma,(\lowW_\la,\lowb_\la)))$ is a solution to problem \eqref{NSP-W} for the initial conditions \eqref{sigma-initial} for every $\tau > 0$. From the uniqueness of the solution we deduce that for all $\tau \ge 0$ we have
 \begin{equation}\label{dec-4}
    W(\eta,\tau,(W^\sigma,b^\sigma)= W(\eta,\tau+\sigma,(\lowW_\la,\lowb_\la))   \  \text{and } 
    b(\tau,(W^\sigma,b^\sigma))= b(\tau+\sigma,(\lowW_\la,\lowb_\la)). 
\end{equation}
Substituting  \eqref{dec-4} in \eqref{dec-3} we deduce that 
\begin{equation}\label{dec-5}
    W(\eta,\tau+\sigma,(\lowW_\la,\lowb_\la))\geq W(\eta,\tau,(\lowW_\la,\lowb_\la))  
    \text{ and } 
    b(\tau+\sigma,(\lowW_\la,\lowb_\la))\geq b(\tau,(\lowW_\la,\lowb_\la)),
\end{equation}
which completes the proof of part a).
\end{proof}
We remark that if $(\low{W},\lowb)$ (resp. $({\overline W}, \upb)$) is defined in (\ref{Sol-for-Low}) (resp. (\ref{Sol-for-Up})), then the Lemmas \ref{Boundedness} and  \ref{W-b-mono} imply that  for every $\lambda>0$
$$
0< b_\lambda\le \lim\limits_{\tau\rightarrow \infty}\lowb(\tau)=\lowb^\infty \le 
\lim\limits_{\tau\rightarrow \infty}\upb(\tau)=\upb^\infty \leq \upb.
$$
In addition, the Lemmas \ref{Boundedness} and  \ref{W-b-mono} imply the convergence of $\low{W}$ and  ${\overline W}$, namely 
\begin{equation}\label{psi}
0 \leq \lim\limits_{\tau\rightarrow \infty} \low{W}(\eta,\tau)=\low{W}^\infty(\eta) \leq \frac{\upb^2}{2}, \ 
\mbox{for all  }\, \eta \in [0,\lowb^\infty]; \
\end{equation}
and
\begin{equation}\label{phi}      
0 \leq \lim\limits_{\tau\rightarrow \infty} {\overline W}(\eta,\tau)= {\overline W}^\infty(\eta) \leq \frac{\upb^2}{2} \, \ 
 \mbox{for all  } \, \eta \in [0,\upb^\infty).
\end{equation} 
Finally we state and prove the main result of this paper, which in turn implies the result of Theorem 
\ref{tw1}.
\begin{theo}\label{t-con}
Let $(W(\eta,\tau,(u_0,b_0)), b(\tau,(u_0,b_0)))$ be the solution 
to the free boundary problem \eqref{NSP-W} 
associated to the initial data $(u_0,b_0)$.
If $(W^\infty, b^\infty)$ is  the unique steady state of \eqref{NSP-W} given by Lemma \ref{self-sim-sol}, then\\
\rm{(a)} $\displaystyle{\lim_{\tau\to\infty} b(\tau) = \binf}$,\\
\rm{(b)} $W(\cdot, \tau)$ converges to $\wi$ uniformly on $[0,\beta]$
for any $\beta \in (0, \binf)$, when $\tau\to \infty$.\\
\rm{(c)} At each time $\tau$ let us extend $W$ to $(0, \infty)^2$ by the formula
$$
\tilde{W}(\eta,\tau) = \left\{
\begin{array}{ll}
W(\eta,\tau)   &  \eta\in[0,b(\tau)],\\
    0 & \eta\in (b(\tau), \infty).
\end{array}
\right.
$$
Then, $\tilde{W}(\cdot, \tau)$ converges uniformly to $\tilde{W}^\infty$,
where
$$
\tilde{W}^\infty(\eta) = \left\{
\begin{array}{ll}
W^\infty(\eta),   &  \eta\in[0,b^\infty],\\
    0, & \eta\in (b^\infty, \infty).
\end{array}
\right.
$$

\end{theo}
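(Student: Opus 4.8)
Here is how I would organize the argument.

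\medskip

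\emph{Overall strategy.} The plan is to trap the orbit of $(u_0,b_0)$ between the orbit of a well-chosen time-independent lower solution and the orbit of a well-chosen time-independent upper solution, to prove that both trapping orbits converge to the steady state $(U,\omega)=(W^\infty,b^\infty)$ of Lemma~\ref{self-sim-sol}, and then to transfer the convergence to the squeezed orbit by Dini's theorem. The point where we depart from \cite{Bou,BoHi2023} is that we never prove convergence of $W_\eta$: the limit profiles are identified using a weak formulation of the equation together with a conservation (balance) identity.

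\medskip

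\emph{Step 1 (trapping).} Take $\tilde h=h$ and, by Lemma~\ref{LowUp} and the estimates in its proof, pick $\la>1$ so large that $(\lowW_\la,\lowb_\la)$ is a lower solution with $b_\la<\min\{b_0,\omega\}$, $U_\la\le u_0$ on $[0,b_\la]$ and $U_\la\le U$ on $[0,b_\la]$ (the last inequality since $e^{-\la s^2/4}\le e^{-s^2/4}$ for $\la\ge1$ and $b_\la<\omega$). Likewise, using \eqref{upSol} (and $\omega\le 2h\le\upb$), $(\upW,\upb)$ is an upper solution with $u_0\le\upW$, $U\le\upW$ on their supports, $b_0\le\upb$ and $\omega\le\upb$. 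Writing $(\lowW(\cdot,\cdot),\lowb(\cdot))$ and $(\upW(\cdot,\cdot),\upb(\cdot))$ for the corresponding solutions of \eqref{NSP-W} (a solution is in particular both a lower and an upper solution), Theorem~\ref{comparison} gives, for all $\eta\ge0$, $\tau\ge0$,
\[
\lowW(\eta,\tau)\le W(\eta,\tau,(u_0,b_0))\le\upW(\eta,\tau),\qquad \lowb(\tau)\le b(\tau,(u_0,b_0))\le\upb(\tau),
\]
and, comparing also with the constant-in-time solution $(U,\omega)$, $\lowW(\eta,\tau)\le U(\eta)$, $\lowb(\tau)\le\omega$, $U(\eta)\le\upW(\eta,\tau)$, $\omega\le\upb(\tau)$. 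By Lemma~\ref{W-b-mono} and \eqref{psi}--\eqref{phi}, $\lowW(\cdot,\tau)\nearrow\lowW^\infty$, $\lowb(\tau)\nearrow\lowb^\infty$, $\upW(\cdot,\tau)\searrow\upW^\infty$, $\upb(\tau)\searrow\upb^\infty$, with $\lowb^\infty\le\omega\le\upb^\infty$ and (zero-extended) $\lowW^\infty\le U\le\upW^\infty$.

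\medskip

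\emph{Step 2 (the trapping orbits converge to $(U,\omega)$).} \textbf{(2a)} Fix $\beta\in(0,\lowb^\infty)$; for $\tau$ large $\lowb(\tau)>\beta$, so $\lowW$ solves the linear equation on $[0,\beta]$ with $-\lowW_\eta(0,\tau)=h$. Multiplying by $\phi\in C^2([0,\beta])$ with $\phi(\beta)=\phi'(\beta)=0$, integrating by parts twice in $\eta$ (the boundary terms at $\beta$ vanish, the one at $0$ produces $h\,\phi(0)$), and then integrating in $\tau$ over $[T,T+1]$, one obtains an identity whose only $\tau$-derivative contribution is $\int_0^\beta e^{\eta^2/4}\bigl(\lowW(\eta,T+1)-\lowW(\eta,T)\bigr)\phi\,d\eta$. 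Letting $T\to\infty$ and using \emph{only} the pointwise convergence $\lowW(\eta,\tau)\to\lowW^\infty(\eta)$ together with dominated convergence, every term passes to the limit and one gets
\[
0=h\,\phi(0)+\phi'(0)\,\lowW^\infty(0)+\int_0^\beta (e^{\eta^2/4}\phi')'\,\lowW^\infty\,d\eta .
\]
Hence $\lowW^\infty$ is a distributional — thus classical — solution of $(\lowW^\infty)_{\eta\eta}+\tfrac\eta2(\lowW^\infty)_\eta=0$ on $(0,\beta)$ with $-(\lowW^\infty)_\eta(0)=h$; as $\beta<\lowb^\infty$ is arbitrary,
\[
\lowW^\infty(\eta)=\lowW^\infty(0)-h\int_0^\eta e^{-s^2/4}\,ds\quad\text{on }[0,\lowb^\infty],
\]
a smooth function, and the same formula holds for $\upW^\infty$ on $[0,\upb^\infty]$. \textbf{(2b)} For any orbit, $E(\tau):=\int_0^{b(\tau)}W(\eta,\tau)\,d\eta+b(\tau)$ satisfies $\dot E=h-\tfrac12E$ — integrate the equation over $(0,b(\tau))$ and use the boundary and Stefan conditions in \eqref{NSP-W} — so $E(\tau)\to 2h$, and by dominated convergence $\int_0^{\lowb^\infty}\lowW^\infty\,d\eta+\lowb^\infty=2h$ and $\int_0^{\upb^\infty}\upW^\infty\,d\eta+\upb^\infty=2h$. \textbf{(2c)} Substituting the formula from (2a) into the balance law (using $\int_0^b\int_0^\eta e^{-s^2/4}\,ds\,d\eta=b\int_0^b e^{-s^2/4}\,ds-2(1-e^{-b^2/4})$) gives $\lowW^\infty(\lowb^\infty)=\tfrac{2h}{\lowb^\infty}e^{-(\lowb^\infty)^2/4}-1$ and $\upW^\infty(\upb^\infty)=\tfrac{2h}{\upb^\infty}e^{-(\upb^\infty)^2/4}-1$. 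For the upper orbit, $\upW^\infty(\upb^\infty)\ge0$ forces $h\ge\tfrac{\upb^\infty}{2}e^{(\upb^\infty)^2/4}$; since $x\mapsto\tfrac x2e^{x^2/4}$ is strictly increasing and equals $h$ only at $\omega$, $\upb^\infty\le\omega$, so with $\upb^\infty\ge\omega$ we get $\upb^\infty=\omega$, then $\upW^\infty(\omega)=0$, hence $\upW^\infty=U$. For the lower orbit, $g(b):=\tfrac{2h}{b}e^{-b^2/4}-1-h\int_b^\omega e^{-s^2/4}\,ds$ has $g'(b)=-\tfrac{2h}{b^2}e^{-b^2/4}<0$ and $g(\omega)=0$, so $g(b)>0$ for $b<\omega$; but $\lowW^\infty(\lowb^\infty)\le U(\lowb^\infty)=h\int_{\lowb^\infty}^\omega e^{-s^2/4}\,ds$ is exactly $g(\lowb^\infty)\le0$, forcing $\lowb^\infty\ge\omega$, hence $\lowb^\infty=\omega$, $\lowW^\infty(\omega)=0$ and $\lowW^\infty=U$.

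\medskip

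\emph{Step 3 (conclusion) and the main obstacle.} Part (a) follows from $\lowb(\tau)\le b(\tau,(u_0,b_0))\le\upb(\tau)$ and $\lowb(\tau),\upb(\tau)\to\omega$. For (b)--(c): the zero-extensions satisfy $\tilde{\lowW}(\cdot,\tau)\nearrow\tilde U$ and $\tilde{\upW}(\cdot,\tau)\searrow\tilde U$ monotonically and pointwise on $[0,\infty)$, all functions being continuous (since $\lowW(\lowb(\tau),\tau)=U(\omega)=0$), so by Dini's theorem the convergence is uniform on every compact set, and since both are $\equiv 0$ on $[\upb,\infty)$, uniform on $[0,\infty)$; squeezing $\tilde{\lowW}\le\tilde{W}(\cdot,\tau,(u_0,b_0))\le\tilde{\upW}$ gives the uniform convergence of $\tilde{W}(\cdot,\tau,(u_0,b_0))$ to $\tilde{W}^\infty$, whose restriction to $[0,\beta]$ with $\beta<b^\infty$ is part (b); translating back to $(x,t)$ yields Theorem~\ref{tw1}. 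The technical heart — and the step I expect to be delicate — is (2a): one must arrange the passage to the limit in the PDE so that it uses nothing about $W_\tau$ or about $W_\eta$ away from the fixed boundary, and the double integration by parts against a test function vanishing to second order at the right endpoint is precisely what achieves this. A secondary point requiring care is justifying the balance identity and all dominated-convergence steps in the presence of the moving free boundary; this is handled by working throughout with the zero-extended profiles on a fixed bounded interval.
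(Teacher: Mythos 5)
Your proposal is correct, and it reaches the identification of the limit profiles by a genuinely different route from the paper. The paper's Steps 1--2 establish the energy estimate $\int_T^{T+1}\int_0^{b(\tau)}W_\eta^2\le M_1$, extract weakly convergent subsequences of $W_\eta(\cdot,\tau_n)$ to conclude $\lW^\infty,\uW^\infty\in H^1$ (then $H^2$), and in Steps 3--4 recover \emph{all} the boundary conditions of \eqref{ODE-P} --- including the Stefan condition at $\binf$ --- from a weak formulation with test functions supported up to the free boundary, finally invoking uniqueness of the steady state (Lemma \ref{self-sim-sol}). You avoid the gradient estimate and the weak-compactness step entirely: by testing only against $\phi$ vanishing to second order at an interior point $\beta<\binf$, you need no information at the free boundary and obtain the explicit two-parameter formula $W^\infty(\eta)=W^\infty(0)-h\int_0^\eta e^{-s^2/4}\,ds$ directly (the constancy-of-distributions argument gives smoothness for free); the missing free-boundary information is then supplied by the exact linear ODE $\dot E=h-\tfrac12 E$ for $E(\tau)=\int_0^{b(\tau)}W\,d\eta+b(\tau)$, which encodes the flux and Stefan conditions in integrated form, and your algebra in (2c) (which I checked, including the sign of $g'$ and the one-sided inequalities $\upW^\infty(\upb^\infty)\ge0$ and $\lowW^\infty\le U$ coming from comparison with the stationary solution) correctly pins down $\lowb^\infty=\upb^\infty=\omega$ and both limit profiles equal to $U$. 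What each approach buys: yours is shorter and sidesteps the delicate recovery of $\wi_\eta(\binf)=-\binf/2$ from the weak formulation, but it leans on special structure of this problem (explicit solvability of the steady-state ODE and the exact balance law); the paper's $H^1$/$H^2$ route is heavier but would survive in settings where neither is available. The two points you flag yourself --- justifying the balance identity and the integrations by parts up to the fixed and moving boundaries within the stated classical regularity --- are handled by an $\eps$-retraction of the domain of integration and are no more delicate than the cutoff computations in the paper's own Step 1.
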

\begin{proof}.
{\it  Step 1.} We have to identify the limits $\LWi$ and $\UWi$, and improve the convergence. For this purpose, we shall show the estimate,
\begin{equation}\label{rs0.1}
 \int_T^{T+1}\int_0^{b(\tau)} W_\eta^2(\eta,\tau)\, d\eta d\tau \le M_1,
\end{equation}
where  $W= \low{W}$ or $W={\overline W}$ are the functions defined in \eqref{Sol-for-Low} and \eqref{Sol-for-Up}, respectively.

In principle, we do not know if $W_{\eta\eta}$ and $W_\tau$ are square integrable over $\Omega_{T,1} :=\{(\eta,\tau): \eta\in(0, b(\tau)), \tau\in(T, T+1)\}$. This is why we set $\psi^\delta\in W^{1,\infty}(\{(\eta,\tau): \eta\in(0, b(\tau)), \tau\in(0,  \infty)\}) $
$$
\psi^\delta(\eta, \tau) = \left\{
\begin{array}{ll}
0 & \eta \in [0, \delta) ,\\
\frac 1\delta (\eta - \delta) &  \eta \in [ \delta, 2 \delta),\\
1 & \eta \in [2 \delta, b(\tau) - 2\delta),\\
1-\frac 1\delta (\eta -b(\tau) + 2\delta) &  \eta \in [ b(\tau) - 2\delta, b(\tau) - \delta),\\
0 &\eta\in [b(\tau) - \delta,\infty)
\end{array}
\right.
$$
and $\phi^\delta \in W^{1,\infty}((0,  \infty))$, where 
$$
\phi^\delta(\tau) = \left\{
\begin{array}{ll}
    0 &  \tau \in [0, T+\delta), \\
    \frac1\delta (\tau -T-\delta) & \tau \in [T+\delta, T+2\delta),\\
   1  &   \tau \in [T+2\delta, T+1-2\delta)), \\
    1-\frac1\delta (\tau -(T+1-2\delta)) & \tau \in [T+1-2\delta, T+1-\delta),\\
    0 &  \tau \in [T+1-\delta, \infty).
\end{array}
\right.
$$
Finally, we set $\vfi^\delta (\eta,\tau)= \psi^\delta(\eta,\tau)\phi^\delta(\tau)$. Now, let us multiply the equation $(\ref{NSP-W})-(i)$ by $\vfi^\delta W$ and  integrate over $\Omega_{T,1}$. We arrive at
$$
L_1^\delta(T) := \int_{\Omega_{T,1}}W W_\tau \vfi^\delta\, d\eta d\tau 
=\int_{\Omega_{T,1}}( W W_{\eta\eta} + \frac\eta2 W_\eta W) \vfi^\delta\, d\eta d\tau =: R_1^\delta(T).
$$
We first analyze the left-hand-side, we see that integration by parts yields,
\begin{equation*}
2L_1^\delta(T) = 
\int_{\Omega_{T,1}} \vfi^\delta (W^2)_\tau \,  d\tau d\eta  = 
-\int_{\Omega_{T,1}} \vfi^\delta_\tau \ W^2\, d\tau d\eta.
\end{equation*}
The boundary terms vanish, because the support of $\vfi^\delta$ does not intersect $\partial\Omega_{T,1}.$ Now, we want to  compute the limit $L_1(T) =\lim\limits_{\delta\to 0^+} L_1^\delta(T) $. We remark that
\begin{align*}
  L_1(T) 
  =\lim_{\delta\to 0^+}\left(
  - \frac1{2\delta} \int_{T}^{T+1}\int_{b(\tau)-2\delta}^{b(\tau)-\delta}\dot{b}(\tau)
  \phi^\delta(\tau) W^2(\eta,\tau)  \,  d\eta d\tau + \right.\\
  \left. \frac1{2\delta} \int_{T+1-2\delta}^{T+1-\delta}\int_0^{b(\tau)}
  \psi^\delta W^2  \,  d\eta d\tau - \frac1{2\delta}
\int_{T+\delta}^{T+2\delta}\int_0^{b(\tau)}
 \psi^\delta W^2  \,  d\eta d\tau\right). 
\end{align*}
For the limit in the first integral, note that from the continuity of the integrand, we can apply the mean value property for integrals to obtain that 
$$\lim_{\delta\to 0^+}
\frac1{2\delta} \int_{T}^{T+1}\int_{b(\tau)-2\delta}^{b(\tau)-\delta}\dot{b}(\tau)
\phi^\delta(\tau) W^2(\eta,\tau)  \,  d\eta d\tau= 
\frac{1}{2} \int_{T}^{T+1} \dot{b}(\tau)
W^2(b(\tau),\tau)  \,  d\tau. $$
Applying the condition at the moving boundary, we deduce that 
$$\lim_{\delta\to 0^+}
   \frac1{2\delta} \int_{T}^{T+1}\int_{b(\tau)-2\delta}^{b(\tau)-\delta}\dot{b}(\tau)
  \phi^\delta(\tau) W^2(\eta,\tau)  \,  d\eta d\tau=0.$$
As for the other terms, we proceed in a similar way to deduce that
 $$\lim_{\delta\to 0^+} \frac1{2\delta} \int_{T+1-2\delta}^{T+1-\delta}\int_0^{b(\tau)}
\psi^\delta W^2  \,  d\eta d\tau=  \frac12
\int_0^{b(T+1)} W^2 (\eta, T+1) \,  d\eta, $$
and that
 $$\lim_{\delta\to 0^+} \frac1{2\delta} \int_{T+\delta}^{T+2 \delta}\int_0^{b(\tau)}
 \psi^\delta W^2  \,  d\eta d\tau=  \frac12
\int_0^{b(T)} W^2 (\eta, T) \,  d\eta,.$$
Hence, we conclude that
$$
L_1(T) =  \frac12
\int_0^{b(T+1)} W^2 (\eta, T+1) \,  d\eta - \frac12
\int_0^{b(T)} W^2 (\eta, T)\,  d\eta.
$$
Next we consider the term $R^\delta_1(T)$. We integrate by parts to obtain
$$
R^\delta_1(T) =-\int_{\Omega_{T,1}}
( W_\eta^2 \vfi^\delta+  W W_{\eta} \vfi^\delta_\eta +
\frac 14  W^2 (\eta \vfi^\delta_\eta + \vfi^\delta))\, d\eta d\tau. 
$$
Again here, the boundary terms vanish, because  the support of $\vfi^\delta$ is contained in $\Omega_{T,1}$. 
We remark that the same argument as above leads us to
\begin{align*}
    \lim_{\delta\to 0^+} \int_{\Omega_{T,1}} W W_{\eta} \vfi^\delta_\eta\, d\eta d\tau &=
    \lim_{\delta\to 0^+} \frac1\delta
    \int_{T}^{T+1} \left( \int_\delta^{2\delta}
    W W_{\eta} \phi^\delta\, d\eta d\tau 
    -  \int_{b(\tau) -2\delta}^{b(\tau)-\delta} W W_{\eta} \phi^\delta \, d\eta d\tau\right)
    \\ &=
\int_T^{T+1} (-h  W(0,\tau)- W(b(\tau), \tau) W_\eta(b(\tau), \tau)) \, d\eta d\tau \\
&= - \int_T^{T+1} h  W(0,\tau) \, d\eta d\tau,
\end{align*}
where we have also used the boundary condition (\ref{NSP-W}$-ii$) as well as the condition at the interface (\ref{NSP-W}$-iii$).
Similarly one can show that
\begin{align*}
    \lim_{\delta\to 0^+} \int_{\Omega_{T,1}} \frac 14  W^2 (\eta \vfi^\delta_\eta + \vfi^\delta))\, d\eta d\tau =
    \frac14\int_{\Omega_{T,1}} W^2(\eta,\tau)\,d\eta d\tau.
\end{align*}
Hence, we conclude
$$
R_1(T) = \lim_{\delta\to 0^+} R^\delta_1(T) =
-\int_{\Omega_{T,1}} W_\eta^2 \,  d\tau d\eta+ h \int_T^{T+1} W(0,\tau)\,  d\tau -\frac14 \int_{\Omega_{T,1}} W^2 \,  d\tau d\eta.
$$

In view of Lemma   \ref{Boundedness}  a possible choice of the constant $M_1$ is given by              
$$
M_1 = h \frac{\upb^2}{2} +  \frac{\upb^5}{8}.
$$
{\it Step 2.} Applying the mean value theorem for integrals in  (\ref{rs0.1}) we deduce from Step 1 that there exist two sequences of  points  $\tau_n', \tau_n''\in [n, n+1)$ such that 
\begin{equation}\label{rs4}
 \int_0^{{\underline b}(\tau_n')} \lW_\eta^2(\eta, \tau_n')\, d\eta,\quad \int_0^{{\overline b}(\tau_n'')} \uW_\eta^2(\eta, \tau_n'')\, d\eta \le M_1.
\end{equation}
Since $\lbb(\tau_n')\le {\underline b}^\infty \le  {\overline b}^\infty \le \ubb(\tau_n'')$, it follows that (\ref{rs4}) implies the bounds
\begin{equation}\label{bound}
\| \lW_\eta (\cdot, \tau_n')\|_{L^2(0,\beta)}, \ \| \uW_\eta (\cdot, \tau_n'')\|_{L^2(0, {\overline b}^\infty)}\le \sqrt{M_1}
\end{equation}
for all $\beta\in (0,{\underline b}^\infty)$. 
Hence, we can select  subsequences (not relabelled)  such that
$$
\lW_\eta(\cdot, \tau_n')\rightharpoonup \psi^{\beta} \hbox{ in } L^2(0,\beta) \quad\hbox{and}\quad \uW_\eta (\cdot, \tau_n'')\rightharpoonup \Psi\hbox{ in } L^2(0,\overline b^\infty).
$$
Since the limit is unique, we deduce that $\psi^{\beta}$ and $\Psi$ do not depend on the choice of the sequence $\tau_n$ and using \eqref{psi} and \eqref{phi} we conclude that  $\psi^{\beta}$ is the weak derivative of $\underline{ W}_\eta^\infty$ in every interval $(0,\beta)$ and that  $\Psi =  \overline W_\eta^\infty$. So that $\underline W^\infty {\chi_{(0,\beta)}}\in H^1(0,\beta)$
 for all $\beta \in (0, {\underline b}^\infty)$  and thus $\overline W^\infty \in H^1(0, \upb^\infty)$. In fact,  $\underline W^\infty \in H^1(0, {\underline b}^\infty)$. Indeed, in view of Lebesgue monotone convergence theorem we have
$$
\lim_{\beta \rightarrow {\underline b}^\infty}\int_0^{\underline b^\infty} ({\underline W}_\eta^\infty)^2(\eta) \chi_{(0,\beta)}(\eta)\,d\eta = \int_0^{{\underline b}^\infty}({\underline  W}_\eta^\infty)^2 (\eta)\,d\eta \le M_1.
$$
{\it Step 3.}  We claim that  $(\underline{W}^\infty, {\underline b}^\infty)$ and $(\UWi, {\overline b}^\infty)$ are both  stationary solutions, namely solutions  of \eqref{ODE-P}. Hence they are smooth and equal. Indeed, we multiply  the equation (\ref{NSP-W}-$i$) by $\vfi\in C^\infty_c(\bR)$ such that $\vfi_\eta(0)=0$ and we
integrate 
on $\Omega_{T,1}$. Proceeding as in step 2 we set
$$
L_2(T) = \int_{\Omega_{T,1}} W_\tau \varphi  \, d\eta d\tau 
=\int_{\Omega_{T,1}}(W_{\eta\eta} \varphi + \frac\eta2 W_\eta \varphi) \, d\eta d\tau = R_2(T).
$$
\begin{align*}
    L_2(T) &=\int_{\Omega_{T,1}} W_\tau (\eta,\tau)\vfi(\eta) \, d\eta d\tau \\&= \int_0^{b(T+1)} W(\eta, T+1) \vfi(\eta)\, d\eta -  \int_0^{b(T)} W(\eta, T) \vfi(\eta)\, d\eta,
\end{align*}
where $W= \lW$ or $W = \uW$. We then deduce from Lebesgue's dominated convergence theorem  that $\lim_{T\to\infty} L_2(T) =0.$

Next, we  investigate the right-hand-side $R_2(T)$. Integration by parts yields
\begin{align*}
R_2(T) & =
\int_{\Omega_{T,1}} W_{\eta\eta}\vfi\, d\eta  d\tau +
\int_{\Omega_{T,1}} \frac\eta2 W_\eta \vfi \, d\eta  d\tau \\
& = 
\int_{\Omega_{T,1}} W( \vfi_{\eta\eta}- \frac12(\eta\vfi)_{\eta}) \, d\eta d\tau 
- \int_T^{T+1} (\dot b + \frac b 2) \vfi(b(\tau)) +  h \vfi(0).
\end{align*}
Now, we pass to the limit as $T \rightarrow \infty$.  It follows from Lebesgue's dominated convergence theorem that 
$$
\lim_{T\to\infty}\int_{\Omega_{T,1}} W(\eta, \tau)( \vfi_{\eta\eta}- \frac12(\eta\vfi)_{\eta}) \, d\eta d\tau 
= \int_0^{b^\infty} W^\infty(\eta)( \vfi_{\eta\eta}- \frac12(\eta\vfi)_{\eta}) \, d\eta,
$$
where $(W^\infty, b^\infty)$ is either $(\underline{W}^\infty,{\underline b}^\infty)$ or $(\overline{W}^\infty, {\overline b}^\infty)$.
Let us  denote by $\Phi$ an antiderivative of $\vfi$. Then,
$$
\lim_{T\to\infty}\int_T^{T+1} \dot b \vfi(b(\tau))\, d\tau
= \lim_{T\to\infty} (\Phi(b(T+1)) - \Phi(b(T))) =0.
$$
In addition,
$$
\lim_{T\to\infty}\int_T^{T+1}\frac b 2 \vfi(b(\tau))\, d\tau
= \frac12 \binf \vfi(\binf).
$$
Finally, we  collect all the results concerning $R_2(T)$, while keeping in mind that $\lim_{T\to\infty}L_2(T)=0$. This yields
\begin{equation}\label{r8}
0=\lim_{T\to\infty} R_2(T) =
\int_0^{\binf} \wi (\eta)( \vfi_{\eta\eta}- \frac12(\eta\vfi)_{\eta}) \, d\eta
- \frac\binf 2 \vfi(\binf) + h \vfi(0),
\end{equation}
for all smooth functions $\varphi$ in $\bR$ such that $\vfi_\eta(0)=0$.
In particular $\wi$ satisfies the differential equation (\ref{ODE-P}-$i$) in the sense of distributions.\\

\noindent {\it Step 4.} We recall that $\wi \in H^1(0,\binf)$. It is easy to infer from (\ref{r8}) that  $\wi_{\eta\eta} = - \displaystyle{\frac\eta2} 
W^\infty_\eta\in L^2(0, \binf)$, which in turn implies that  $W^\infty \in H^2(0,b^\infty)$. \\

Next we search for the boundary condition and the conditions on the moving boundary satisfied by $\wi$. 
After integrating by parts twice in  (\ref{r8}) we obtain,
\begin{align*}
 0&=
\int_0^{\binf}(\wi_{\eta\eta} +\frac\eta2\wi_\eta)\vfi\,d\eta
+\wi \vfi_\eta|_{\eta=0}^{\eta=\binf}
-\wi_\eta \vfi |_{\eta=0}^{\eta=\binf} \\
&- \frac\binf 2 \vfi(\binf)(\wi(\binf) +1) + h \vfi(0),
\end{align*}
so that
\begin{align}\label{r3}
0=& \wi(\binf)\varphi_\eta (\binf)-\wi_\eta(\binf)\vfi(\binf) +\wi_\eta(0) \vfi(0)\nonumber \\
&- {\frac{\binf}{2}}  \vfi(\binf)(\wi(\binf) +1) + h \vfi(0),
\end{align}
for all smooth functions $\varphi$ on $\bR$ such that $\vfi_\eta(0)=0$.
Now, if we additionally choose $\vfi$ such that $\vfi(\binf) = \vfi_\eta (\binf)=0$, then (\ref{r3}) reduces to 
$$
\vfi(0)( \wi_\eta(0) + h) =0,
$$
and since $\vfi(0)$ is arbitrary, we deduce that 
$$
\wi_\eta(0) + h =0.
$$
Thus (\ref{r3}) becomes
\begin{equation}
\label{r4}
		0= \wi(\binf)\varphi_\eta (\binf) -\wi_\eta(\binf)\vfi(\binf) 
		- {\frac{\binf}{2}}  \vfi(\binf)(\wi(\binf) +1).
\end{equation}
Next we suppose that $\vfi(\binf) =0$, but $\vfi_\eta(\binf) \neq 0$. Then 
$$
 \wi(\binf) \vfi_\eta(\binf) =0,
$$
and hence 
$$
 \wi(\binf) =0.
 $$ 
 Then, (\ref{r4}) becomes
 \begin{equation}\label{r5}
 0= -\wi_\eta(\binf)\vfi(\binf)  - {\frac{\binf}{2}}  \vfi(\binf).
 \end{equation}
Suppose that $\vfi(\binf) \neq 0$. Then  (\ref{r5}) implies that
$$
\wi_\eta(\binf) = - \frac\binf 2.
$$
We deduce that the solution pair $(W^\infty, b^\infty)$ coincides with the unique solution of  Problem (\ref{ODE-P}) or in other words with the unique steady state solution of the time evolution problem, Problem (\ref{NSP-W}).\\

\noindent {\it Step 5.} We recall that, in view of step 4, $\wi\in H^2(0,\binf)\subset C^{1, \frac12}([0,\binf])$. Moreover,
the convergence of $\uW$ to $\wi$ (resp. ${\tilde{\lW}}$ to $W^\infty$) is monotone on $[0,\binf]$.  
Hence, we deduce with the help of Dini's Theorem that this convergence is uniform.

Finally, since $b_\lambda \leq b_0 \leq \upb$ and since ${\underline w}_\lambda \le u_0\le {\overline W}$, the comparison principle implies that ${\underline b}(\tau) \leq b(\tau) \leq {\overline b}(\tau)$ and ${\tilde{\underline W} (\eta,\tau)\le 
\tilde{W}(\eta,\tau)}\le 
W (\eta,\tau)\le \uW (\eta,\tau)$ for all $(\eta, \tau)$. We conclude that $b(\tau) \rightarrow b^\infty$  and that $\tilde W(\tau)$ converges to $\wi$ uniformly on compact sets of $[0, \infty)$ as $\tau \rightarrow \infty$. 
\end{proof}

Now, Theorem \ref{tw1} easily follows from Theorem \ref{t-con}.

\subsection*{Acknowledgments}
The second author, SR, was partly supported by the Project INV-006-00030 from Universidad Austral
and PICT-2021-I-INVI-00317. The work of the second  author, PR, was in part supported by the National Science Centre, Poland, through the grant number
2017/26/M/ST1/00700. A part of the work was performed while DH and SR visited the University of Warsaw, whose hospitality is greatly appreciated.

\end{document}